\newtheorem{Definition}{Definition}[section]
\newtheorem{Theorem}[Definition]{Theorem}
\newtheorem{theorem}[Definition]{Theorem}
\newtheorem{Lemma}[Definition]{Lemma}
\newtheorem{Corollary}[Definition]{Corollary}
\numberwithin{equation}{section}
\newcommand{\lc}{\mathcal{L}}
\newcommand{\rc}{\mathcal{R}}
\newcommand{\dc}{\mathcal{D}}
\newcommand{\lp}{\mathcal{L}^+}
\newcommand{\rp}{\mathcal{R}^+}
\newcommand{\dpp}{\mathcal{D}^+}
\newcommand{\ld}{\mathcal{L}^{\bullet}}
\newcommand{\rd}{\mathcal{R}^{\bullet}}
\newcommand{\dd}{\mathcal{D}^{\bullet}}
\newcommand{\vld}{\mathbf{L}^{\bullet}}
\newcommand{\vrd}{\mathbf{R}^{\bullet}}
\newcommand{\vdd}{\mathbf{D}^{\bullet}}
\newcommand{\lzp}{\mathbf{LZ}^{+}}
\newcommand{\rzp}{\mathbf{RZ}^{+}}
\newcommand{\vd}{\mathbf{D}}
\newcommand{\vi}{\mathbf{I}}
\newcommand{\vn}{\mathbf{N}}
\newcommand{\vln}{\mathbf{LN}}
\newcommand{\vrn}{\mathbf{RN}}
\newcommand{\bi}{\mathbf{BI}}
\newcommand{\lqbi}{\mathbf{LQBI}}
\newcommand{\rqbi}{\mathbf{RQBI}}
\newcommand{\lnbd}{\mathbf{LNB}^{\bullet}}
\title{On the idempotent semirings such that $\dd$ is the least distributive lattice congruence}
\author{M. K. Sen$^{a}$, A. K. Bhuniya$^{b}$\footnote{Corresponding author} \ and R. Debnath$^c$}
\date{}
\begin{document}

\maketitle

\begin{center}
a) Department of Pure Mathematics, University of Calcutta, Kolkata, India \\
b) Department of Mathematics, Visva-Bharati, Santiniketan-731235, India. \\
c) Department of Mathematics, Kurseong college, Kurseong-734203, India.
\end{center}
{\it E-mail addresses}: senmk6@yahoo.com$^a$, anjankbhuniya@gmail.com$^b$ and rajib.d6@gmail.com$^c$.

\begin{abstract}
Here we describe the least distributive lattice congruence $\eta$ on an idempotent semiring in general and characterize the varieties $\vdd, \vld$ and $\vrd$ of all idempotent semirings such that $\eta=\dd, \ld$ and $\rd$, respectively. If $S \in \vdd [\vld, \vrd]$, then the multiplicative reduct $(S, \cdot)$ is a [left, right] normal band. Every semiring $S \in \vdd$ is a spined product of a semiring in $\vld$ and a semiring in $\vrd$ with respect to a distributive lattice.
\end{abstract}
{\bf Keywords :} idempotent semiring; least distributive lattice congruence; normal band, spined product, Malcev's product.
\\{\bf AMS Mathematics Subject Classification:}  16Y60.

\section{Introduction}
A \emph{semiring} $(S,+, \cdot)$ is an algebra with two binary operations + and $\cdot$ such that both the additive reduct $(S, +)$ and the multiplicative reduct $(S, \cdot)$ are semigroups and such that the following distributive laws hold:
\begin{align*}
 x(y+z) = xy+xz \; \textrm{and} \; (x+y)z = xz+yz.
\end{align*}
If moreover, both the reducts $(S, +)$ and $(S, \cdot)$ are bands, then $S$ is called an \emph{idempotent semiring}. Thus the class of all idempotent semirings is an equational class satisfying two additional identities:
\begin{align*}
x+x \approx x \; \textrm{and} \; x \cdot x \approx x.
\end{align*}
The variety of all idempotent semirings will be denoted by $\vi$.

We now fix the notation for some varieties of idempotent semirings and give their determining identity within $\vi$:
\begin{center}
\begin{table}[ht]
\begin{tabular}{llll}
\hline
  Notation & Defining identity within $\vi$ \qquad\qquad & Notation & Defining identity within $\vi$ \\
  \hline
  $\mathbf{R}^{+}$ & $x+y+x \approx x$, & $\mathbf{R}^{\bullet}$ & $xyx \approx x$, \\
  $\mathbf{LZ}^{+}$ & $x+y \approx x$, & $\mathbf{RZ}^{+}$ & $x+y \approx y$, \\
  $\mathbf{LZ}^{\bullet}$ & $xy \approx x$, & $\mathbf{RZ}^{\bullet}$ & $xy \approx y$, \\
  $\mathbf{LNB}^{\bullet}$ & $xyz \approx xzy$, & $\mathbf{RNB}^{\bullet}$ & $xyz \approx yxz$, \\
  $\lqbi$ & $x+xy+x \approx x$, & $\rqbi$ & $x+yx+x \approx  x$, \\
  $\vln$ & $x+xyx \approx x$, & $\vrn$ & $xyx+x \approx x$,\\
  $\vn$ & $x+xyx+x \approx x$, & $\mathcal{S}l^+$ & $x+y \approx y+x$.
   \\ \hline
\end{tabular}
\end{table}
\end{center}

For an idempotent semiring $(S, +, \cdot)$ the Green's relations $\lc, \rc$ and $\dc$ on the additive [multiplicative] reduct $(S, +)[(S, \cdot)]$  will be denoted by $\lp, \rp$ and $\dpp [\ld, \rd, \dd]$. Since the multiplicative reduct $(S, \cdot)$ of an idempotent semiring $S$ is a band, $\dd, \ld$ and $\rd$ are given by: for $a, b \in S$,
\begin{align*}
& a \dd b \, \Leftrightarrow \, aba=a, \ bab=b, \\
& a \ld b \, \Leftrightarrow \, ba=b, \ ab=a \\
\textrm{and} \; & a \rd b \, \Leftrightarrow \, ab=b, \ ba=a.
\end{align*}
The relations $\dpp, \lp$ and $\rp$ are given dually.

The variety $\vi$ of all idempotent semirings contains the variety $\vd$ of all distributive lattices; and hence there is the least distributive lattice congruence on each idempotent semiring $S$. Throughout this article, we denote the least distributive lattice congruence on $S$ by $\eta_S$ or precisely by $\eta$. Then $(S/\eta, +)$ is a semilattice. Since $(S, +)$ is a band, $\dpp$ is the least congruence on the idempotent semiring $S$ such that $(S/\dpp, +)$ is a semilattice. Hence $\dpp \subseteq \eta$. In \cite{SGS}, Sen, Guo and Shum characterized  in depth the idempotent semirings $S$ such that $\eta = \dpp$.
\begin{Lemma} \cite{SGS}
Let $S$ be an idempotent semiring. Then the following conditions are equivalent.
\begin{enumerate}
\item
$\dpp$ is the least distributive lattice congruence on $S$;
\item
$S$ satisfies the identities:
\begin{align}
                x+xy+x & \approx x       \label{bi1} \\
\textrm{and} \; x+yx+x & \approx x;       \label{bi2}
\end{align}
\item
$S \in \mathbf{R}^{+} \circ \vd$.
\end{enumerate}
\end{Lemma}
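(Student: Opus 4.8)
The plan is to establish the three equivalences as the cycle $(2)\Rightarrow(1)\Rightarrow(3)\Rightarrow(2)$, after first recording two facts valid for \emph{every} idempotent semiring. The excerpt already supplies $\dpp\subseteq\eta$. I would then note that $\dpp$ is not merely an additive congruence but a full semiring congruence: if $a\,\dpp\,b$, then $a+b+a=a$ and $b+a+b=b$, so by distributivity $ca+cb+ca=c(a+b+a)=ca$ and $cb+ca+cb=c(b+a+b)=cb$, giving $ca\,\dpp\,cb$, with the right-translation case dual. Hence $S/\dpp$ is always an idempotent semiring whose additive reduct is a semilattice, and it is meaningful to ask when this quotient is a distributive lattice.

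For $(2)\Rightarrow(1)$, I would push the identities $x+xy+x\approx x$ and $x+yx+x\approx x$ down to $S/\dpp$. There $+$ is commutative and idempotent, so these collapse to the absorption laws $\bar a+\bar a\bar b=\bar a$ and $\bar a+\bar b\bar a=\bar a$. Writing $\bar u\le\bar v$ for $\bar u+\bar v=\bar v$, the absorption laws give $\bar a\bar b\le\bar a$, $\bar a\bar b\le\bar b$, and symmetrically for $\bar b\bar a$. The key step is that $\bar a\bar b$ is the \emph{greatest} lower bound: if $\bar c\le\bar a$ and $\bar c\le\bar b$, then $\bar a\bar b=(\bar c+\bar a)(\bar c+\bar b)$, and expanding by distributivity and applying the two absorption laws reduces the right-hand side to $\bar c+\bar a\bar b$, i.e. $\bar c\le\bar a\bar b$. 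Thus $\bar a\bar b=\bar a\wedge\bar b=\bar b\bar a$, so $\cdot$ is commutative and realizes the meet of the $+$-semilattice; with the semiring distributive laws this makes $S/\dpp$ a distributive lattice. Consequently $\dpp$ is a distributive lattice congruence, so $\eta\subseteq\dpp$, and with $\dpp\subseteq\eta$ we conclude $\eta=\dpp$. I expect this greatest-lower-bound computation --- extracting commutativity of $\cdot$ and the meet property from two one-sided absorption laws --- to be the main obstacle.

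For $(1)\Rightarrow(3)$, I would take the witnessing congruence to be $\rho=\dpp$ itself. Since $\dpp=\eta$, the quotient $S/\dpp$ is a distributive lattice; since $\dpp$ is a semiring congruence, each class is a subsemiring; and since each $\dpp$-class is a rectangular band under $+$, it satisfies $x+y+x\approx x$ and therefore lies in $\vrp$. Hence $S\in\vrp\circ\vd$. Finally, for $(3)\Rightarrow(2)$, suppose $\rho$ is a congruence with $S/\rho\in\vd$ and every $\rho$-class in $\vrp$. In the distributive lattice $S/\rho$ absorption gives $\overline{x+xy}=\bar x$, so $x$ and $x+xy$ share a $\rho$-class; applying the $\vrp$-identity $x+y+x\approx x$ inside that class to the pair $x,\ x+xy$ yields $x+(x+xy)+x=x$, which simplifies to $x+xy+x=x$. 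The dual computation with $yx$ in place of $xy$ gives $x+yx+x=x$, establishing $(2)$ and closing the cycle.
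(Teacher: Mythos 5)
Your proposal is correct, but note that the paper itself offers no proof of this lemma at all: it is imported verbatim from Sen--Guo--Shum \cite{SGS}, so there is nothing in the text to compare against line by line. Your argument is a legitimate, self-contained substitute. The cycle $(2)\Rightarrow(1)\Rightarrow(3)\Rightarrow(2)$ is sound: the verification that $\dpp$ is a full semiring congruence (multiplicative compatibility via distributivity, additive compatibility because $\dpp$ is the least semilattice congruence of the band $(S,+)$) is right; the greatest-lower-bound computation $(\bar c+\bar a)(\bar c+\bar b)=\bar c+\bar c\bar b+\bar a\bar c+\bar a\bar b=\bar c+\bar a\bar b$ correctly extracts commutativity of $\cdot$ and the meet property from the two one-sided absorption laws, and the missing absorption $\bar a(\bar a+\bar b)=\bar a$ indeed follows by distributivity; and the Clifford--McLean fact that $\dpp$-classes are additively rectangular bands settles $(1)\Rightarrow(3)$. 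Your route differs from how the paper handles the analogous results it does prove (Theorems 3.1, 3.4): there the authors first describe the least distributive lattice congruence explicitly as $\sigma$ (resp.\ $\sigma^{\ast}$) and then compare $\sigma$ with the Green's relation in question, whereas you avoid that machinery entirely and argue lattice-theoretically on the quotient $S/\dpp$; your approach is more elementary, while the $\sigma$-based approach is what lets the paper treat $\dpp,\dd,\ld,\rd$ uniformly. Two small points of hygiene: the claim that each congruence class is a subsemiring follows not from $\dpp$ being a semiring congruence per se, but from idempotency of both operations in the quotient (if $a\,\rho\,b$ then $\overline{ab}=\bar a\bar a=\bar a$ and $\overline{a+b}=\bar a$); and in $(3)\Rightarrow(2)$ you should say explicitly that $\bar x+\bar y\bar x=\bar x$ uses commutativity of the meet in the distributive lattice $S/\rho$. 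Neither is a gap.
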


An idempotent semiring $S$ is said to be a \emph{band semiring} if $\dpp$ is the least distributive lattice congruence on $S$, equivalently if it satisfies the identities (\ref{bi1}) and (\ref{bi2}). We will denote the variety of all band semirings by $\bi$. We define an idempotent semiring to be a \emph{left [right] quasi-band semiring} if it satisfies the identity (\ref{bi1}) [(\ref{bi2})]. Thus $\bi =\lqbi \cap \rqbi$. Additive reduct $(S, +)$ of a band semiring $S$ is a regular band \cite{WZG}.

The idempotent semirings for which $\eta = \dd$ have been characterized by Pastijn and Zhao \cite{PZ2000}. There are several articles characterizing the variety $\vi$ of idempotent semirings by Green's relations \cite{PG}, ~\cite{PZ2000},~\cite{SGS} - \cite{ZSG}.

We extend the problem to the idempotent semirings $S$ such that the least distributive lattice congruence $\eta$ on $S$ is either of the Green's relations $\lp$, $\rp$, $\ld$ and $\rd$. For this, we first describe explicitly the least distributive lattice congruence $\eta$ on an idempotent semiring in general, which makes it possible to characterize the idempotent semirings for which $\eta=\dpp$ $[\dd, \lp, \ld, \rp, \rd]$ in an unified and simple approach.

We present our results in two parts. The idempotent semirings $S$ such that $\eta=\dpp$ $[\lp, \rp]$ have been characterized in \cite{BD}. The following result has some use in the last section of this articel.
\begin{Lemma} \cite{BD}                              \label{lp is the least lat cong}
Let $S$ be an idempotent semiring. Then the following conditions are equivalent.
\begin{enumerate}
\item
$\lp$ is the least distributive lattice congruence on $S$;
\item
$S \in \vln$ and $\dd \subseteq \lp$;
\item
$S$ satisfies the identities:
\begin{equation}
x+yxy \approx x;                 \label{identity for lp}
\end{equation}
\item
$S \in \mathbf{LZ}^{+} \circ \vd$.
\end{enumerate}
\end{Lemma}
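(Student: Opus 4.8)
I will prove the four conditions equivalent through the cycle $(1)\Rightarrow(2)\Leftrightarrow(3)\Rightarrow(1)$ together with $(1)\Leftrightarrow(4)$, keeping in force throughout the standing inclusions $\lp\subseteq\dpp\subseteq\eta$ (the last two noted in the text) and $\dd\subseteq\eta$. The latter is immediate from the fact that $S/\eta$ is a distributive lattice: if $aba=a$ and $bab=b$ then $[a]\wedge[b]=[a]$ and $[b]\wedge[a]=[b]$, forcing $[a]=[b]$. These inclusions already show that condition $(1)$ is equivalent to the single chain of equalities $\lp=\dpp=\eta$, so the lemma is really a statement about when the additive Green relation $\lp$ happens to be a distributive lattice congruence.

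First I would dispose of the inexpensive implications. For $(1)\Rightarrow(2)$, working in the distributive lattice $S/\eta=S/\lp$ one has $[xyx]=[x]\wedge[y]\le[x]$, hence $[x+xyx]=[x]$, i.e. $(x+xyx)\,\lp\,x$; reading off the left coordinate of this relation gives $x+xyx=x$, so $S\in\vln$, and $\dd\subseteq\eta=\lp$ was noted above. For $(2)\Leftrightarrow(3)$ I would exploit the identity, valid in every idempotent semiring, that $xyx$ and $yxy$ are $\dd$-related: using $xyxy=xy$ in a band one checks $(xyx)(yxy)(xyx)=xyx$ and $(yxy)(xyx)(yxy)=yxy$. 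Granting $\dd\subseteq\lp$ this yields $xyx+yxy=xyx$ and $yxy+xyx=yxy$, so that $x+yxy=(x+xyx)+yxy=x+(xyx+yxy)=x+xyx=x$, and symmetrically; the reverse passage is analogous and also re-derives $\dd\subseteq\lp$ directly, since for $a\,\dd\,b$ the identity $(3)$ gives $a+b=a+bab=a$.

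For $(1)\Leftrightarrow(4)$ I would argue that any congruence $\rho$ witnessing $S\in\lzp\circ\vd$ must coincide with $\lp$: its classes are left zero for $+$, so $\rho\subseteq\lp$, while $(S/\rho,+)$ being a semilattice forces $\lp\subseteq\rho$; then $\eta\subseteq\rho=\lp\subseteq\dpp\subseteq\eta$ collapses everything, giving $(1)$. Conversely, when $\eta=\lp$ the relation $\eta$ is itself such a witness, each $\lp$-class being a left-zero subband of $(S,+)$.

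The real work, and the step I expect to be the main obstacle, is $(3)\Rightarrow(1)$, which amounts to showing that $\lp$ is a distributive lattice congruence, for then leastness of $\eta$ yields $\eta\subseteq\lp\subseteq\dpp\subseteq\eta$. The heart of the matter is to prove that $(S,+)$ is left regular, equivalently that $\dpp=\lp$, equivalently that additive $\rp$ is trivial. The intended mechanism is this: suppose $u\,\rp\,v$, i.e. $u+v=v$ and $v+u=u$. Multiplying $u+v=v$ on the right and on the left by $u$ and using distributivity with idempotency gives $u+vu=vu$ and $u+uv=uv$; left multiplying $u+vu=vu$ by $u$ gives $u+uvu=uvu$, while $(3)$ (which yields $x+xyx=x$, as in $(3)\Rightarrow(2)$) gives $u+uvu=u$, whence $u=uvu$. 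The symmetric computation from $v+u=u$ gives $v=vuv$, so $u\,\dd\,v$; since $(3)$ already gives $\dd\subseteq\lp$ we get $u\,\lp\,v$, hence $u+v=u$, and combined with $u+v=v$ this forces $u=v$. Thus $\dpp=\lp$. Finally, because $\dd\subseteq\lp=\dpp$, the multiplicative reduct of $S/\dpp$ has trivial $\dd$ and is therefore a semilattice, and $(3)$ descends to the absorption law $x+xy=x$ in $S/\dpp$; with the inherited distributive law this makes $S/\dpp$ a distributive lattice, so $\eta\subseteq\dpp=\lp$, completing the cycle.
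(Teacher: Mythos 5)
Your proof is correct, but note first that the paper contains no proof of this lemma to compare against: it is imported verbatim from \cite{BD} (``On band orthorings''), and is only used as a black box in Section 4. Judged instead against the method the paper uses for the multiplicative analogues (Theorems \ref{ddot least lat cong}, \ref{ldot least lat cong} and \ref{rdot least lat cong}), your route is genuinely different: the paper's strategy is to first describe $\eta$ explicitly as $\sigma^{\ast}$, note that $\sigma=\eta$ on members of $\vn$ (Theorem \ref{sigma is the ldlc}), and then read the equivalences off that description, whereas you never touch $\sigma$ at all. Your substitutes are (i) the reduction of (1) to the chain $\lp=\dpp=\eta$, (ii) the computation showing that identity (\ref{identity for lp}) forces the additive $\rp$ to be trivial (via $u=uvu$, $v=vuv$ and $\dd\subseteq\lp$), which gives $\lp=\dpp$ and thereby sidesteps having to verify directly that $\lp$ is a semiring congruence, and (iii) passing to $S/\dpp$ and checking it is a distributive lattice. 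This is elementary and self-contained; the paper's $\sigma$-machinery buys a uniform treatment of all six relations $\lp,\rp,\dpp,\ld,\rd,\dd$ at the cost of Section 2. Two compressed steps each deserve one explicit line. First, the derivation of $\vln$ from (\ref{identity for lp}): substitute $y\mapsto xy$ in (\ref{identity for lp}) to get $x+xy\approx x$ (since $(xy)x(xy)=xy$ in a band), then $y\mapsto yx$ to get $x+xyx\approx x$. Second, the inference ``$\dd\subseteq\dpp$ implies $\dd$ is trivial on $S/\dpp$'' is not a formal property of an arbitrary congruence containing $\dd$; it holds here because $\dd$ is the \emph{least} semilattice congruence on the band $(S,\cdot)$ (Clifford--McLean), so $(S/\dpp,\cdot)$ is a homomorphic image of the semilattice $(S,\cdot)/\dd$ and hence itself a semilattice; alternatively, observe that (\ref{identity for lp}) descends to $S/\dpp$, where addition is commutative, so $[a]\,\dd\,[b]$ yields $[a]=[a]+[b]=[b]+[a]=[b]$. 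With those two lines added, the argument is complete.
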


In Section 2 of this article, we characterize the least distributive lattice congruence on an idempotent semiring. The idempotent semirings $S$ such that $\eta=\dd$ $[\ld, \rd]$ have been studied in Section 3. In Section 4, we show that the multiplicative reduct $(S, \cdot)$ of an idempotent semiring $S$ such that $\eta = \dd$ is a normal band, and such a semiring $S$ is a spined product of an idempotent semiring $S_1$ such that $\eta = \ld$ and an idempotent semiring $S_2$ such that $\eta = \rd$ with respect to a distributive lattice.

We refer to \cite{howie} for the information concerning semigroup theory, \cite{Golan} for background on semirings and \cite{McKenzie} for notions concerning universal algebra and lattice theory.

\section{The least distributive lattice congruence on an idempotent semiring}
Let $S$ be an idempotent semiring. Define a binary relation $\sigma$ on $S$ by: for $a, b \in S$,
\begin{align*}
a \sigma b  & \ \ \textrm{if and only if} \\
            & \hspace{2cm} aba=aba+a+aba \ \textrm{and} \ bab=bab+b+bab.
\end{align*}
\begin{Lemma}               \label{congruence generated by sigma}
Let $S$ be an idempotent semiring. Then the congruence relation generated by $\sigma$ is the least distributive lattice congruence on $S$.
\end{Lemma}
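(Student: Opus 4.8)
The plan is to prove that $\langle\sigma\rangle=\eta$ by two inclusions, where I write $\rho$ for the congruence generated by $\sigma$ and $\eta$ for the least distributive lattice congruence. For the inclusion $\rho\subseteq\eta$ it suffices to show $\sigma\subseteq\eta$, since $\eta$ is a congruence. Given $a\,\sigma\,b$, I apply the canonical map onto the distributive lattice $S/\eta$ (join $=+$, meet $=\cdot$): there $\overline{aba}=\overline{a}\wedge\overline{b}$, while $\overline{aba+a+aba}=(\overline{a}\wedge\overline{b})\vee\overline{a}=\overline{a}$ by idempotency and absorption. Hence the first defining equation of $\sigma$ forces $\overline{a}\wedge\overline{b}=\overline{a}$ and the second forces $\overline{a}\wedge\overline{b}=\overline{b}$, so $\overline{a}=\overline{b}$, i.e. $a\,\eta\,b$. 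Thus $\sigma\subseteq\eta$ and therefore $\rho\subseteq\eta$.

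For the reverse inclusion I will show that $S/\rho$ is a distributive lattice; minimality of $\eta$ then yields $\eta\subseteq\rho$ and hence equality. Since $S/\rho$ is an idempotent semiring in which $\cdot$ already distributes over $+$, it is a distributive lattice as soon as (i) $(S/\rho,\cdot)$ is commutative, (ii) the absorption laws hold, and (iii) $(S/\rho,+)$ is commutative; then $S/\rho$ is a lattice in which meet distributes over join, so it is distributive. For (i) I use the clean observation that $\dd\subseteq\sigma$: if $a\,\dd\,b$ then $aba=a$ and $bab=b$, so both defining equations of $\sigma$ reduce to $a=a+a+a$ and $b=b+b+b$, which hold. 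Consequently $\rho$ restricts to a band congruence on $(S,\cdot)$ containing $\dd$, so $(S/\rho,\cdot)$ is a homomorphic image of the structure semilattice $(S,\cdot)/\dd$ and is therefore commutative.

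For (ii) I verify directly from the definition that for all $x,y\in S$ one has $x\,\sigma\,(x+xyx)$ and $x\,\sigma\,(xyx+x)$: after expanding the relevant products by distributivity, the first defining equation collapses by idempotency of $+$, and the second reduces to an identity of the form $w+w=w$ in the additive band. Passing to $S/\rho$ and using (i), where $xyx=xy$, these two pairs give the absorption identities $x+xy=x$ and $xy+x=x$, and hence $x(x+y)=x$ and $(x+y)x=x$. For (iii) I argue via least upper bounds in the multiplicative semilattice order $\le$ of $S/\rho$ (with $p\le q\Leftrightarrow pq=p$). The two absorption identities give $x(x+y)=x$ and $y(x+y)=yx+y=y$, so $x,y\le x+y$; and if $z$ is any common upper bound, then $(x+y)z=xz+yz=x+y$, so $x+y\le z$. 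Thus $x+y$ is the least upper bound of $\{x,y\}$, and by the identical computation so is $y+x$; uniqueness of least upper bounds forces $x+y=y+x$. This gives (iii), so $S/\rho$ is a distributive lattice and $\rho=\eta$.

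The conceptually transparent points are $\sigma\subseteq\eta$ and $\dd\subseteq\sigma$ (which delivers multiplicative commutativity for free). I expect the main obstacle to be the direct verification that the two absorption pairs lie in $\sigma$: because the additive reduct is only a band, neither commutative nor regular, the second defining equation must be reduced purely by idempotency and the distributive laws, with no freedom to rearrange summands. The second subtlety is that additive commutativity is \emph{not} a formal consequence of multiplicative commutativity and one-sided absorption alone; it has to be extracted by the least-upper-bound argument, which is why both one-sided absorptions $x+xy=x$ and $xy+x=x$ are needed.
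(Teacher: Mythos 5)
Your proof is correct, and although it necessarily shares the paper's two-inclusion skeleton (every distributive lattice congruence contains $\sigma$; the quotient by the congruence generated by $\sigma$ is a distributive lattice), the second inclusion is executed by a genuinely different route. The paper verifies three families of pairs directly inside $\sigma$: $(a+b,\,b+a)$, $(ab,\,ba)$ and $(a+ab,\,a)$; the additive-commutativity pair is the delicate one, handled by the factorization $(a+b)(b+a)(a+b)+(a+b)+(a+b)(b+a)(a+b)=(a+b)(b+a+a+b+b+a)(a+b)=(a+b)(b+a)(a+b)$. You avoid that computation entirely: you obtain commutativity of multiplication in the quotient from the observation $\dd\subseteq\sigma$ (the paper merely asserts $(ab)\,\sigma\,(ba)$, which is the special case $ab\,\dd\,ba$ of your observation); you use the absorption pairs $(x,\,x+xyx)$ and $(x,\,xyx+x)$, which collapse to band identities more readily than the paper's pair $(a,\,a+ab)$ because $(x+xyx)x(x+xyx)$ simplifies all the way down to $x+xyx$; and you then recover commutativity of addition order-theoretically, from uniqueness of least upper bounds in the multiplicative semilattice order of the quotient. (Your first inclusion, via the canonical map onto $S/\eta$ and lattice absorption, is the same argument as the paper's, just phrased in the quotient rather than modulo an arbitrary distributive lattice congruence.) What your route buys is that every membership in $\sigma$ you must check reduces to trivial idempotency computations, at the price of a longer deductive chain (multiplicative commutativity, then absorption, then the order argument); what the paper's route buys is brevity and the stronger explicit fact that the commutativity pairs themselves lie in $\sigma$, not merely in the congruence it generates --- information in the spirit of the paper's later results, where $\sigma$ itself is shown to equal $\eta$ for semirings in $\vn$.
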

\begin{proof}
Let $a, b \in S$. Then we have $(a+b)(b+a)(a+b)+a+b+(a+b)(b+a)(a+b) = (a+b)(b+a+a+b+b+a)(a+b) = (a+b)(b+a)(a+b)$, and similarly \ $(b+a)(a+b)(b+a)=(b+a)(a+b)(b+a)+a+b+(b+a)(a+b)(b+a)$, which implies that $(a+b) \sigma (b+a)$. Also $(ab) \sigma (ba)$ for all $a, b \in S$.

Now, \ $a(a+ab)a+a+a(a+ab)a=(a+aba)+a+(a+aba)=a+aba=a(a+ab)a$ \ and \ $(a+ab)a(a+ab)+a+ab+(a+ab)a(a+ab)=(a+ab)a(a+ab)$ implies that $(a+ab) \sigma a$ for all $a, b \in S$. Thus any congruence on $S$ which contains $\sigma$ is a distributive lattice congruence on $S$.

Consider a distributive lattice congruence $\rho$ on $S$ and $a, b \in S$ such that $a \sigma b$. Then
$aba=aba+a+aba$ implies that $b \rho (b+aba)=(b+aba+a+aba) \rho (b+a) \rho (a+b)$. Similarly from $bab=bab+b=bab$ we have $a \rho (a+b)$ and it follows that $a \rho b$. Thus $\sigma$ is contained in every distributive lattice congruence on $S$. Hence the result follows.
\end{proof}

Now we show that $\sigma$ is not transitive on a semiring in general. For this consider the following example \cite{GPZ} of an idempotent semiring,
\begin{center}
$\begin{array}{c|ccc}
  + & a & b & c \\
  \hline
  a & a & b & c \\
  b & b & b & b \\
  c & c & b & c
\end{array}$
\hspace{2cm}
$\begin{array}{c|ccc}
  \cdot & a & b & c \\
  \hline
  a & a & a & a \\
  b & b & b & b \\
  c & a & b & c
\end{array}$
\end{center}
Then $a \sigma b$ and $b \sigma c$ but $a \overline{\sigma} c$ shows that $\sigma$ is not transitive.

The transitive closure $\sigma^{\ast}$ of $\sigma$ is given by:
\begin{align*}
a \sigma^{\ast} b  & \ \ \textrm{if and only if there exists} \ x \in S \ \textrm{such that} \\
            & \hspace{2cm} axbxa=axbxa+a+axbxa \ \textrm{and} \ bxaxb=bxaxb+b+bxaxb.
\end{align*}

A semiring $(S, +, \cdot)$ is called \emph{almost idempotent} if $(S, +)$ is a semilattice and $a+a^2=a^2$ for every $a \in S$. In \cite{SB5}, we proved that $\sigma^{\ast}$ is the least distributive lattice congruence on an almost idempotent semiring. Since every idempotent semiring $S$ with commutative addition is an almost idempotent semiring, $\sigma^{\ast}$ is the least distributive lattice congruence on $S$. The proof that $\sigma^{\ast}$ remains the least distributive lattice congruence on an idempotent semiring $S$ even if the addition in $S$ is not commutative, is almost similar and so we omit the proof here.
\begin{Theorem}                                                       \label{eta}
Let $S$ be an idempotent semiring. Then $\sigma^{\ast}$ is the least distributive distributive congruence on $S$.
\end{Theorem}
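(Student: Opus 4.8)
The plan is to leverage Lemma \ref{congruence generated by sigma} and reduce everything to a single assertion about $\sigma^{\ast}$. By that lemma the least distributive lattice congruence on $S$ is exactly the congruence generated by $\sigma$, i.e. the smallest congruence containing $\sigma$; indeed, from the proof of that lemma, $\sigma$ lies inside every distributive lattice congruence while every congruence containing $\sigma$ is already a distributive lattice congruence. Since the transitive closure $\sigma^{\ast}$ is contained in every transitive relation containing $\sigma$, in particular in every congruence containing $\sigma$, it would then suffice to prove that $\sigma^{\ast}$ is itself a congruence: it will be the smallest congruence containing $\sigma$, hence the least distributive lattice congruence on $S$.

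First I would record that $\sigma$ is reflexive (using $a\cdot a\cdot a=a$ and $a+a+a=a$) and symmetric by inspection, so $\sigma^{\ast}$ is automatically an equivalence relation. To keep the later computations manageable I would then justify the displayed closed form, that $a\,\sigma^{\ast}\,b$ holds iff there is $x$ with $axbxa=axbxa+a+axbxa$ and $bxaxb=bxaxb+b+bxaxb$; call this relation $\tau$. One gets $\sigma\subseteq\tau$ by taking the witness $x=b$, since then $axbxa=aba$ and $bxaxb=bab$; reflexivity and symmetry of $\tau$ are clear; and $\tau\subseteq\sigma^{\ast}$ follows by exhibiting a short $\sigma$-chain from $a$ to $b$ through the absorbed products $axbxa$ and $bxaxb$, using the same absorption identities (of the form $(a+ab)\,\sigma\,a$) already verified in the proof of Lemma \ref{congruence generated by sigma}. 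The genuinely nontrivial point here is transitivity of $\tau$: given a witness $x$ for $a\,\tau\,b$ and a witness $y$ for $b\,\tau\,c$, one must manufacture a single witness $z$, built from $x$, $y$ and $b$, for $a\,\tau\,c$; in the distributive-lattice quotient the two defining equalities read $a\le b$ and $b\le a$, which both confirms transitivity abstractly and guides the choice of $z$.

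The crux is compatibility of $\sigma^{\ast}$ with $+$ and $\cdot$. It is enough to show that a single $\sigma$-step is preserved up to $\sigma^{\ast}$ under each of the four translations, that is, $a\,\sigma\,b$ implies $ca\,\sigma^{\ast}\,cb$, $ac\,\sigma^{\ast}\,bc$, $(c+a)\,\sigma^{\ast}\,(c+b)$ and $(a+c)\,\sigma^{\ast}\,(b+c)$; chaining then upgrades this to arbitrary $\sigma^{\ast}$-related pairs. For each case I would feed $c$ into the defining equalities $aba=aba+a+aba$ and $bab=bab+b+bab$ by left/right multiplication or addition and use distributivity and idempotency to read off an explicit witness $x$ for the closed form of $\sigma^{\ast}$. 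The multiplicative translations are routine; the real obstacle is the additive translations, precisely because $+$ is not assumed commutative, so the left and right cases are genuinely distinct and the symmetry shortcuts available in the commutative almost-idempotent setting of \cite{SB5} no longer apply. This is exactly the step the discussion before the theorem describes as ``almost similar'' to \cite{SB5}: one reruns the almost-idempotent computation, but every appeal to commutativity of $+$ must be replaced by a separate left and a separate right verification.

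Once compatibility is in hand, $\sigma^{\ast}$ is a congruence containing $\sigma$ and contained in every congruence containing $\sigma$, so it equals the congruence generated by $\sigma$, which is the least distributive lattice congruence on $S$ by Lemma \ref{congruence generated by sigma}. I expect the non-commutative additive compatibility (together with the transitivity of $\tau$ that underlies the closed form) to be the only real difficulty; the rest is bookkeeping organised around the explicit witness description of $\sigma^{\ast}$.
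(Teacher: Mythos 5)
Your high-level reduction is sound, and it is worth saying up front that the paper itself offers no proof of Theorem \ref{eta} to compare against: it appeals to the proof for almost idempotent semirings in \cite{SB5} and asserts that dropping commutativity of $+$ changes nothing essential. Your framing via Lemma \ref{congruence generated by sigma} is correct and even economical: since $\sigma^{\ast}$ is contained in every congruence containing $\sigma$, it suffices to show $\sigma^{\ast}$ is itself a congruence, whereupon it is the congruence generated by $\sigma$ and hence the least distributive lattice congruence (this also spares you re-verifying that the quotient is a distributive lattice, which is buried in the proof of that lemma). The bookkeeping is also right: reflexivity and symmetry of $\sigma$ make $\sigma^{\ast}$ an equivalence, and single-step compatibility of $\sigma$ up to $\sigma^{\ast}$ under the four translations $x \mapsto cx$, $x \mapsto xc$, $x \mapsto c+x$, $x \mapsto x+c$, together with chaining and induction over unary polynomials, upgrades to full compatibility.

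The genuine gap is that the proposal never executes the only steps where the theorem has content. Both (i) transitivity of the displayed relation (your $\tau$), equivalently the identification $\tau = \sigma^{\ast}$, and (ii) the four compatibility claims are left as declarations of intent (``one must manufacture a single witness $z$'', ``I would feed $c$ into the defining equalities''), with no witness exhibited and no identity verified; these band-semiring computations \emph{are} the proof, and they are exactly what the paper also omits. Moreover, the one concrete suggestion you do make, the $\sigma$-chain establishing $\tau \subseteq \sigma^{\ast}$, does not close. The outer links are fine: from $axbxa = axbxa + a + axbxa$ one checks $a(axbxa)a = axbxa$ and $(axbxa)a(axbxa) = axbxa$, so indeed $a \mathrel{\sigma} axbxa$, and dually $b \mathrel{\sigma} bxaxb$. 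But the middle link $axbxa \mathrel{\sigma} bxaxb$ is neither obvious nor addressed, and the appeal to the distributive-lattice quotient cannot repair this: passing to the quotient only shows the defining conditions are \emph{consistent} with $\bar a = \bar b = \bar c \leq \bar x, \bar y$ there, whereas transitivity of $\tau$ is a system of equalities that must be verified in $S$ itself. So what you have is a correct architecture for a proof, with the core computational content --- the part that makes the noncommutative case more than ``almost similar'' to \cite{SB5} --- still missing.
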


Almost all varieties of idempotent semirings considered here are subvarieties of $\vn$. For this, the following result will have an important roll throughout the rest of this article.
\begin{Lemma}                   \label{equivalent N}
An idempotent semiring $S \in \vn$ if and only if it satisfies the identity:
\begin{equation}
xz+xyz+xz \approx xz.
\end{equation}
\end{Lemma}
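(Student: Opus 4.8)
The plan is to prove the two implications separately, with essentially all of the content living in the forward direction.

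The converse is immediate. If $S$ satisfies $xz + xyz + xz \approx xz$, then substituting $z := x$ and using $x\cdot x \approx x$ (which holds because $(S,\cdot)$ is a band) collapses this to $x + xyx + x \approx x$, so $S \in \vn$. So I would dispose of this direction in one line.

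For the forward direction, assume $S \in \vn$, i.e. $x + xyx + x \approx x$. First I would extract two one-sided multiplicative consequences using the distributive laws. Right-multiplying the defining identity by $z$ and applying $x\cdot x \approx x$ gives
\[
xz + xyxz + xz \approx xz,
\]
while substituting $x := z$ into the defining identity and then left-multiplying by $x$ gives
\[
xz + xzyz + xz \approx xz.
\]
Next I would substitute $y := yz$ in the first and $y := xy$ in the second to obtain the two ``sandwich'' identities
\[
xz + (xyz)(xz) + xz \approx xz \qquad\text{and}\qquad xz + (xz)(xyz) + xz \approx xz,
\]
which say that the product $xyz$ is absorbed by the two flanking copies of $xz$ as soon as it is multiplied on one side by an extra $xz$.

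It remains to delete that extra factor, i.e. to pass from these sandwich identities to the clean identity $xz + xyz + xz \approx xz$. Here it is clarifying to reformulate the goal: by right distributivity, $xz + xyz + xz = (x + xy + x)z$, whereas right-multiplying the defining identity by $z$ only yields $(x + xyx + x)z \approx xz$. Thus the entire content of the lemma is that the middle factor $xyx$ may be replaced by $xy$ inside this particular product, even though the stronger law $x + xy + x \approx x$ itself fails in $\vn$. My plan would be to produce $xyz$ as one summand by instantiating the middle variable of the defining identity by a suitable sum (so that distribution creates the term $xyz$ together with several terms of the form $(\cdot)(xz)$ and $(xz)(\cdot)$), and then to collapse the result using additive idempotency $a + a \approx a$, absorbing each unwanted cross-term back into a flanking $xz$ by means of the two sandwich identities above.

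The main obstacle is precisely this final collapse. A single substitution into $x + xyx + x \approx x$ can never yield the target, because instantiating the outer variable forces the middle term to begin and end with that same value, while the desired middle $xyz$ is not flanked by matching copies of $x$; equivalently, the left factor $x$ must genuinely absorb the discrepancy between $z$ and $xz$. So the delicate part of the proof is choosing the additive combination of sandwich identities so that, after distributing and repeatedly applying $a + a \approx a$, every term except $xz + xyz + xz$ cancels, leaving exactly the required identity.
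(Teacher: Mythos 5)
Your converse is fine and matches the paper's one\-liner, and both of your sandwich identities are correctly derived; in fact the first one, $xz+(xyz)(xz)+xz\approx xz$, is exactly the intermediate identity the paper reaches (there written $ac=ac+abcac+ac$), and the instance underlying your second one, $z\approx z+z(xy)z+z$, is exactly the other ingredient the paper uses. But the proposal has a genuine gap precisely where you flag it: the final collapse is never carried out, and the strategy you sketch for it --- choosing an additive combination of the two sandwich identities so that everything but $xz+xyz+xz$ cancels --- cannot work as stated. Additive idempotency $u+u\approx u$ only merges duplicate summands; it never shortens a word. Every middle term available in (any additive recombination of) your sandwich identities still carries the extra factor $xz$, so the bare summand $xyz$ can never appear this way; and, as you yourself observe, substituting a sum into the defining identity of $\vn$ produces only middle terms flanked on both sides by the instantiated outer variable. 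The missing ingredient is multiplicative, not additive.

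The step you are missing is a replacement inside the middle term followed by a band collapse. In the middle term $xyzxz=(xyzx)z$ of your first sandwich identity, replace the final factor $z$ by $z+zxyz+z$ (valid since this is an instance of the $\vn$ identity); distributing gives $xyzxz\approx xyzxz+(xyzx)(zxyz)+xyzxz$, and the multiplicative band structure now collapses the cross term: $(xyzx)(zxyz)=xy(zx)(zx)yz\approx xy(zx)yz=(xyz)(xyz)\approx xyz$. This is how the bare $xyz$ is created. Feeding it back into the first sandwich identity yields $xz\approx xz+xyzxz+xyz+xyzxz+xz$, and from here your planned additive bookkeeping does suffice: adding $xyz+xyzxz+xz$ on the right and using additive idempotency gives $xz+xyz+xyzxz+xz\approx xz$, and one more absorption (replace the trailing $xz$ by the right\-hand side just obtained and use idempotency of $xz+xyz$) gives $xz+xyz+xz\approx xz$. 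This is exactly the paper's proof, written there with $a,b,c$: \ $ac=ac+abcac+ac=ac+abca(c+cabc+c)+ac=ac+abcac+abc+abcac+ac$, where $abca\cdot cabc=abc$ is the same band collapse.
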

\begin{proof}
First assume that $S \in \vn$ and $a, b, c \in S$. Then $ac = (a+abca+a)c=ac+abcac+ac=ac+abca(c+cabc+c)+ac=ac+abcac+abc+abcac+ac$ implies that $ac+abc+abcac+ac=ac$ [add $abc+abcac+ac$ to both sides from right]. Similarly this implies that $ac = ac+abc+ac$.

Converse follows directly.
\end{proof}
\begin{Theorem}                        \label{sigma is the ldlc}
If an idempotent semiring $S \in \vn$, then $\sigma$ is the least distributive lattice congruence on $S$.
\end{Theorem}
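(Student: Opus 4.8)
The plan is to reduce the statement to the transitivity of $\sigma$ and then identify $\sigma$ with a Green's relation pulled back from the maximal additive semilattice quotient of $S$. Note first that $\sigma$ is reflexive and symmetric (immediate from its definition), so its transitive closure is an equivalence relation exactly when $\sigma$ is itself transitive. Since by Theorem \ref{eta} the transitive closure $\sigma^{\ast}$ already equals the least distributive lattice congruence $\eta$, it suffices to prove that $\sigma$ is transitive; then $\sigma=\sigma^{\ast}=\eta$ and we are done.

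To establish transitivity I would pass to the quotient $\bar S=S/\dpp$. Recall from the introduction that $\dpp$ is a semiring congruence and that $(\bar S,+)$ is a semilattice; moreover $\bar S\in\vn$ because $\vn$ is a variety. Writing $\pi\colon S\to\bar S$ for the natural map and $\bar x=\pi(x)$, the key step is the following reformulation of the defining condition of $\sigma$: for $a,b\in S$,
\[
aba+a+aba=aba \iff a\;\dpp\;aba .
\]
The forward implication is obtained by applying $\pi$: in the commutative idempotent semiring $\bar S$ the relation $aba+a+aba=aba$ collapses to $\overline{aba}+\bar a=\overline{aba}$, while the $\vn$-identity in $\bar S$ gives $\bar a+\overline{aba}=\bar a$; comparing the two through commutativity forces $\overline{aba}=\bar a$, that is, $a\,\dpp\,aba$. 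For the reverse implication I would invoke the standard structure theory of bands: each $\dpp$-class of $(S,+)$ is a rectangular band, hence satisfies $x+y+x=x$; applying this with $x=aba$ and $y=a$, which lie in the same class once $a\,\dpp\,aba$, yields $aba+a+aba=aba$.

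Granting this equivalence and its dual (with $b$ and $bab$), $a\,\sigma\,b$ holds precisely when $a\,\dpp\,aba$ and $b\,\dpp\,bab$, i.e. when $\bar a\bar b\bar a=\bar a$ and $\bar b\bar a\bar b=\bar b$ in $\bar S$, which is exactly $\bar a\;\dd\;\bar b$. Thus $\sigma=\pi^{-1}(\dd_{\bar S})$ is the preimage of Green's relation $\dd$ on $\bar S$ under the homomorphism $\pi$. Since $\dd$ is an equivalence relation, so is its preimage; in particular $\sigma$ is transitive, whence $\sigma=\sigma^{\ast}$ and the theorem follows from Theorem \ref{eta}.

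I expect the main obstacle to be the reverse implication of the displayed equivalence, where one must leave the equational calculus valid in $\bar S$ and lift an equality back to $S$: passing to the quotient loses information, so the argument cannot rely on an identity holding throughout $S$ but must exploit the rectangular-band identity on the individual $\dpp$-classes of $(S,+)$. A purely computational alternative would be to start from the explicit description of $\sigma^{\ast}$ and eliminate the auxiliary element $x$ from the sandwiched products $axbxa$ and $bxaxb$ by repeated use of the identity $xz+xyz+xz\approx xz$ of Lemma \ref{equivalent N}; this elimination is the step I would expect to be the most delicate, which is precisely why I prefer routing the argument through $\bar S=S/\dpp$.
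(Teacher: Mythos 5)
Your proposal is correct, but it takes a genuinely different route from the paper's own proof. The paper works computationally in the direction $\eta \subseteq \sigma$: given $a \,\eta\, b$, Theorem \ref{eta} supplies a witness $x$ with $axbxa = axbxa + a + axbxa$ and $bxaxb = bxaxb + b + bxaxb$, and then the identity $xz + xyz + xz \approx xz$ of Lemma \ref{equivalent N} drives a several-line calculation that eliminates $x$ and yields $aba = aba + a + aba$ and $bab = bab + b + bab$, i.e.\ $a \,\sigma\, b$; combined with $\sigma \subseteq \eta$ (Lemma \ref{congruence generated by sigma}) this gives $\sigma = \eta$. So the paper carries out exactly the ``delicate elimination'' you flagged as the step you wanted to avoid, and Lemma \ref{equivalent N} is the tool that makes it work. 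Your argument replaces that calculation with a structural observation: in $\vn$ the condition $aba + a + aba = aba$ is equivalent to $a \,\dpp\, aba$, so $\sigma = \pi^{-1}\bigl(\dd_{S/\dpp}\bigr)$ is the pullback of a Green's relation along the projection $\pi\colon S \to S/\dpp$, hence transitive, hence equal to its transitive closure $\sigma^{\ast}$, which is $\eta$ by Theorem \ref{eta}. Both proofs lean on Theorem \ref{eta} (whose proof the paper omits), though on slightly different facets: the paper needs the explicit witness formula for $\sigma^{\ast}$, while you need only that the transitive closure of $\sigma$ equals $\eta$. What your route buys is brevity, avoidance of Lemma \ref{equivalent N}, a transparent explanation of where the hypothesis $S \in \vn$ enters, and the pullback description of $\sigma$, which resonates with condition (2) of Theorem \ref{ddot least lat cong}. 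One simplification to your write-up: the equivalence you were most worried about needs no structure theory and no passage to $\bar S$ at all. The paper characterizes $\dpp$ by $a \,\dpp\, b$ if and only if $a + b + a = a$ and $b + a + b = b$; since the $\vn$-identity already gives $a + aba + a = a$ in $S$ itself, the single equation $aba + a + aba = aba$ is equivalent to $a \,\dpp\, aba$ directly, so both directions of your displayed equivalence are immediate and the rectangular-band argument can be dropped.
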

\begin{proof}
Let $a, b \in S$ be such that $a \eta b$. Then there exists $x \in S$ such that $axbxa=axbxa+a+axbxa$ and $bxaxb=bxaxb+b+bxaxb$. Since $S \in \vn$, $ab+axb+ab=ab$ and $ba+bxa+ba=ba$, by Lemma \ref{equivalent N}. Then we have
\begin{align*}
  aba & = a(ba+bxa+ba) &&  \\
      & = aba+bxa+aba && \\
      & = aba+(ab+axb+ab)xa+aba && \\
      & = aba+abxa+axbxa+abxa+aba && \\
      & = aba+abxa+axbxa+a+axbxa+abxa+aba && \\
      & = aba+abxa+axbxa+a+(aba+abxa+ && \\
      & \hspace{3cm} axbxa+a+axbxa+abxa+aba) && \text{since $(S, +)$ is a band} \\
      & = aba+abxa+axbxa+a+aba && \\
      & = aba+a+aba && \text{since $(S, +)$ is a band}.
\end{align*}
Similarly, $bab=bab+b+bab$. Thus $a \sigma b$ and so $\eta \subseteq \sigma$. Also $\sigma$ is contained in every distributive lattice congruence. Hence $\sigma = \eta$ is the least distributive lattice congruence on $S$.
\end{proof}

\section{The idempotent semirings such that $\eta=\dd, \ld, \rd$}
In this section we show that the class of all idempotent semirings for which $\eta = \dd [\ld, \rd]$ is an equational class and so a variety. Here we find several systems of identities defining these varieties.

On an idempotent semiring $(S, +, \cdot)$ one may introduce the relations $\leq^{l}_{+}, \, \leq^{l}_{\cdot}, \leq^{r}_{+}, \, \leq^{r}_{\cdot}$ and $\leq_{+}, \, \leq_{\cdot}$ by the following: for $a, b \in S$,
\begin{align*}
& a \leq^{l}_{+} b \hspace{.5cm}  \Leftrightarrow \hspace{.5cm} b=a+b \hspace{1cm}
a \leq^{l}_{\cdot} b \hspace{.5cm} \Leftrightarrow \hspace{.5cm} a=ba \\
& a \leq^{r}_{+} b \hspace{.5cm}  \Leftrightarrow \hspace{.5cm} b=b+a \hspace{1cm}
a \leq^{r}_{\cdot} b \hspace{.5cm} \Leftrightarrow \hspace{.5cm} a=ab \\
& \hspace{1.4cm} \leq_{+} = \leq^{l}_{+} \cap \leq^{r}_{+} \hspace{.5cm} \textrm{and} \hspace{.5cm}
\leq_{\cdot} = \leq^{l}_{\cdot} \cap \leq^{r}_{\cdot}
\end{align*}
The relations $\leq^{l}_{+}, \, \leq^{l}_{\cdot}, \leq^{r}_{+}, \, \leq^{r}_{\cdot}$ are quasi-orders and the relations $\leq_{+}$ and $\leq_{\cdot}$ are partial orders~\cite{nambooripad}.

In \cite{PZ2000}, Pastijn and Zhao characterized the idempotent semirings $S$ such that $\dd$ is the least distributive lattice congruence on $S$. Following result is already proved in \cite{PZ2000}. Use of Theorem \ref{eta} shorten the proof which we would like to include here.
\begin{Theorem}                              \label{ddot least lat cong}
Let $S$ be an idempotent semiring. Then the following conditions are equivalent.
\begin{enumerate}
\item
$\dd$ is the least distributive lattice congruence on $S$;
\item
$S \in \vn$ and $\dpp \subseteq \dd$;
\item
$S$ satisfies the identity
\begin{equation}                          \label{identity for ddot}
x \approx xyx+x+xyx.
\end{equation}
\end{enumerate}
\end{Theorem}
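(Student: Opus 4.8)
The plan is to prove the cyclic chain $(1)\Rightarrow(2)\Rightarrow(3)\Rightarrow(1)$, resting on one elementary observation that I would record at the outset: for every idempotent semiring one has $\dd \subseteq \sigma \subseteq \eta$. Indeed, if $a \dd b$ then $aba=a$ and $bab=b$, so additive idempotency gives $aba+a+aba = a+a+a = a = aba$ and symmetrically for $b$, whence $a \sigma b$; and $\sigma \subseteq \eta$ is already contained in Lemma \ref{congruence generated by sigma}. I would also recall that $\dpp \subseteq \eta$ always holds (as noted in the introduction) and that, since $S/\eta$ is a distributive lattice, the absorption law in $S/\eta$ yields $x \; \eta \; (x+xyx+x)$ for all $x,y \in S$.

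For $(1)\Rightarrow(2)$, the inclusion $\dpp \subseteq \dd$ is immediate from $\dpp \subseteq \eta = \dd$. To obtain $S \in \vn$ I would feed $x \; \eta \; (x+xyx+x)$ into the hypothesis $\eta=\dd$: writing $a=x$ and $b=x+xyx+x$, the relation $a \dd b$ forces $aba=a$, and expanding $x(x+xyx+x)x = x+xyx+x$ by distributivity gives exactly $x+xyx+x=x$. For $(2)\Rightarrow(3)$, which I expect to be the crux, I would set $u = xyx+x+xyx$ and first verify, using only $\vn$ in the form $x+xyx+x=x$ together with additive idempotency, that $u \dpp x$, i.e.\ $u+x+u=u$ and $x+u+x=x$; each is a short collapse of an alternating sum of $x$'s and $xyx$'s. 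The hypothesis $\dpp \subseteq \dd$ then upgrades this to $u \dd x$, so that $xux=x$. On the other hand a direct distributive expansion gives $xux = x(xyx+x+xyx)x = xyx+x+xyx = u$, and comparing the two evaluations yields $u=x$, which is precisely identity (\ref{identity for ddot}).

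For $(3)\Rightarrow(1)$ I would first extract $\vn$ from (\ref{identity for ddot}): substituting the identity into $x+xyx$ and collapsing with idempotency gives $x+xyx = (xyx+x+xyx)+xyx = xyx+x+xyx = x$ (and symmetrically $xyx+x=x$), hence $x+xyx+x = x+x = x$, so $S \in \vn$. By Theorem \ref{sigma is the ldlc} this gives $\sigma=\eta$, and we already have $\dd \subseteq \sigma$ from the opening remark. It then remains only to see $\sigma \subseteq \dd$: if $a \sigma b$ then $aba = aba+a+aba$, but (\ref{identity for ddot}) with $x=a,\ y=b$ says exactly $aba+a+aba=a$, so $aba=a$; symmetrically $bab=b$, whence $a \dd b$. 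Thus $\dd = \sigma = \eta$, which is (1).

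The main obstacle is the step $(2)\Rightarrow(3)$: the two hypotheses $\vn$ and $\dpp \subseteq \dd$ are individually weak, and the point is to guess the right witness $u=xyx+x+xyx$ and to notice that $\vn$ is exactly what places $u$ and $x$ in the additive relation $\dpp$, so that the inclusion $\dpp \subseteq \dd$ can be invoked; the final comparison of $xux=u$ against $xux=x$ then closes the gap. Once the inclusions $\dd \subseteq \sigma \subseteq \eta$ and Theorem \ref{sigma is the ldlc} are in hand, the other two implications are routine.
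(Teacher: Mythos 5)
Your proposal is correct, and it follows the same overall architecture as the paper: the cyclic chain $(1)\Rightarrow(2)\Rightarrow(3)\Rightarrow(1)$, with $(3)\Rightarrow(1)$ resting, exactly as in the paper, on the inclusion $\dd\subseteq\sigma$ and on Theorem \ref{sigma is the ldlc} (which turns membership in $\vn$ into $\sigma=\eta$). The genuine difference is at $(2)\Rightarrow(3)$, which the paper does not prove at all --- it merely says the argument is ``similar to the proof of Theorem 2.17'' of Pastijn--Zhao \cite{PZ2000}. You supply a complete, self-contained argument for precisely this step: taking the witness $u=xyx+x+xyx$, you use $\vn$ (in the form $x+xyx+x\approx x$) and the band property of $(S,+)$ to check $u+x+u=u$ and $x+u+x=x$, so $u\mathrel{\dpp}x$; the hypothesis $\dpp\subseteq\dd$ then gives $xux=x$, while distributivity and multiplicative idempotency give $xux=u$ directly, forcing $u=x$. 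This is exactly the missing content, and it makes your version strictly more self-contained than the paper's. There are also two harmless cosmetic differences in $(1)\Rightarrow(2)$: the paper extracts $\vn$ from $(a+ab)\mathrel{\dd}a$ whereas you use $x\mathrel{\dd}(x+xyx+x)$, and the paper proves $\dpp\subseteq\dd$ by routing through $\sigma$ whereas you invoke the standing inclusion $\dpp\subseteq\eta$ directly; both are equally valid, and yours is slightly shorter.
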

\begin{proof}
$\mathbf{(1) \ \Rightarrow \ (2):}$ \ Let $a, b \in S$. Since $\dd$ is a distributive lattice congruence, $(a+ab) \dd a$. Then we have
\begin{align*}
 a=a(a+ab)a \, \Rightarrow \, & a=a+aba \\
               \Rightarrow \, & a=a+aba+a
\end{align*}
and so $S \in \vn$.

Now let $a, b \in S$ such that $a \dpp b$. Then $a=a+b+a$ and $b=b+a+b$ implies that $bab=bab+b+bab$ and $aba=aba+a+aba$, and so $a \sigma b$. Thus $\dpp \subseteq \sigma \subseteq \eta=\dd$.
\\ $\mathbf{(2) \ \Rightarrow \ (3):}$ \ Similar to the proof of the Theorem 2.17 \cite{PZ2000}.
\\ $\mathbf{(3) \ \Rightarrow \ (1):}$ \ Let $a, b \in S$ such that $a \eta b$. Since $S$ satisfied the identity (\ref{identity for ddot}, it follows that $S \in \vn$ and so $\eta=\sigma$, by Theorem \ref{sigma is the ldlc}. Hence we have
$$ aba=aba+a+aba \, \textrm{and} \, bab=bab+b+bab. $$
Since $S$ satisfies the identity (\ref{identity for ddot}), it follows that $a=aba$ and $b=bab$. Thus $a \dd b$ and so $\eta \subseteq \dd$. Also $\dd \subseteq \sigma = \eta$. Therefore $\dd=\eta$ and so $\dd$ is the least distributive lattice congruence on $S$.
\end{proof}

Now wecharacterize the idempotent semirings for which $\ld$ is the least distributive lattice congruence. First we prove the following lemma.
\begin{Lemma}                                               \label{bi1 iff n and}
Let $S$ be an idempotent semiring. Then the following conditions are equivalent.
\begin{enumerate}
\item
$S$ satisfies the identity (\ref{bi1});
\item
$S \in \vn$ and $\rd \subseteq \dpp$.
\end{enumerate}
\end{Lemma}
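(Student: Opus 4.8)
The plan is to prove the two implications separately. The implication $(1)\Rightarrow(2)$ is a routine substitution argument, while $(2)\Rightarrow(1)$ carries the real content.

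For $(1)\Rightarrow(2)$, I would first obtain $S\in\vn$ by substituting $yx$ for $y$ in (\ref{bi1}): this gives $x+x(yx)+x=x+xyx+x=x$, which is the defining identity of $\vn$. To see $\rd\subseteq\dpp$, take $a,b\in S$ with $a\rd b$, so that $ab=b$ and $ba=a$. Rewriting the middle $b$ as $ab$ and the middle $a$ as $ba$ and applying (\ref{bi1}) twice yields $a+b+a=a+ab+a=a$ and $b+a+b=b+ba+b=b$, that is, $a\dpp b$; hence $\rd\subseteq\dpp$.

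The harder implication is $(2)\Rightarrow(1)$, where from $S\in\vn$ and $\rd\subseteq\dpp$ I must recover $a+ab+a=a$. The naive attempt of applying $\rd\subseteq\dpp$ to the pair $a,ab$ fails, since $a\rd ab$ would force $aba=a$, which need not hold. The device I would use is to replace $ab$ by $aba$, which lies in the same multiplicative $\dd$-class. Using only idempotency of the band $(S,\cdot)$ (so that $abab=ab$, and hence $ababa=aba$), one checks directly that $ab\cdot aba=aba$ and $aba\cdot ab=ab$, i.e. $ab\rd aba$. Feeding this into $\rd\subseteq\dpp$ produces the additive relation $ab+aba+ab=ab$.

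Next I would bootstrap to the pair $a+ab$ and $a+aba$. Expanding the two products and simplifying via idempotency together with the relation $ab+aba+ab=ab$ just obtained, one verifies $(a+ab)(a+aba)=a+aba$ and $(a+aba)(a+ab)=a+ab$, so that $(a+ab)\rd(a+aba)$. A second application of $\rd\subseteq\dpp$ gives $(a+aba)+(a+ab)+(a+aba)=a+aba$. Collapsing the first three summands on the left by the $\vn$-identity $a+aba+a=a$ reduces this to $a+ab+a+aba=a+aba$; adding $a$ on the right and using $\vn$ once more then strips the tail and leaves exactly $a+ab+a=a$, which is (\ref{bi1}). The main obstacle is precisely the one flagged above: $\rd\subseteq\dpp$ cannot be applied to the pair $(a,ab)$ occurring in the target identity, so the proof must manufacture genuinely $\rd$-related elements. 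The two successive choices---first $(ab,aba)$ within a single $\dd$-class, then the lifted pair $(a+ab,a+aba)$---are the crux, and the fact that the second pair is $\rd$-related hinges on the additive relation produced by the first application, with the $\vn$-identity supplying exactly what is needed to remove the outer terms.
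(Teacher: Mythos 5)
Your proof is correct, and it shares the crucial insight with the paper's proof, but finishes differently. In the easy direction $(1)\Rightarrow(2)$ your argument is the paper's argument verbatim. In the hard direction $(2)\Rightarrow(1)$, both you and the paper start from the same key move — since $(a,ab)$ need not be $\rd$-related, one instead uses the pair $(ab,aba)$, which is $\rd$-related in any band, and feeds it into $\rd\subseteq\dpp$. From there the routes diverge: the paper makes only this single application of the hypothesis, taking $aba+ab+aba=aba$, adding $a$ on both sides, and then squeezing out $a+ab+a=a$ by a somewhat fiddly regrouping of long additive words using $\vn$ and additive idempotency. You instead apply $\rd\subseteq\dpp$ a \emph{second} time, to the manufactured pair $(a+ab,\,a+aba)$, and then strip the outer terms with two uses of the $\vn$-identity $a+aba+a=a$. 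Your product computations check out: $(a+ab)(a+aba)=a+aba+aba+aba=a+aba$ uses only band identities, while $(a+aba)(a+ab)=a+ab+aba+ab=a+(ab+aba+ab)=a+ab$ uses exactly the additive relation $ab+aba+ab=ab$ produced by the first application, as you note — so the two applications are genuinely nested rather than independent. The trade-off: the paper's finish is more economical (one use of the hypothesis, then pure additive band algebra), while yours costs an extra multiplicative verification and a second invocation of the hypothesis but replaces the paper's delicate sum-regrouping with steps that are each mechanical.
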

\begin{proof}
$\mathbf{(1) \, \Rightarrow \, (2):}$ \, Let $a, b \in S$ such that $a \rd b$. Then $a=ba$ and $b=ab$. Now $a=a+ab+a=a+b+a$ and $b=b+ba+b=b+a+b$ implies that $a \dpp b$. Thus $\rd \subseteq \dpp$. Also it follows trivially that $S \in \vn$.
\\$\mathbf{(2) \, \Rightarrow \, (1):}$ \, Let $a, b \in S$. Then $ab \rd aba$ implies that $ab \dpp aba$. This implies that
\begin{align*}
& aba+ab+aba=aba \\
\Rightarrow \, & a+aba+ab+aba+a=a+aba+a \\
\Rightarrow \, & a+aba+ab+aba+a=a, \hspace{1cm} (\textrm{since} \, S \in \vn) \\
\Rightarrow \, & a+aba+ab+aba+a=a+aba+ab+a \\
\Rightarrow \, & a=a+aba+ab+a \\
\Rightarrow \, & a+ab+a=a+aba+ab+a \\
\Rightarrow \, & a+ab+a=a
\end{align*}
Thus $S$ satisfies the identity (\ref{bi1}).
\end{proof}
\begin{Theorem}                  \label{ldot least lat cong}
Let $S$ be an idempotent semiring. Then the following conditions are equivalent.
\begin{enumerate}
\item
$\ld$ is the least distributive lattice congruence on $S$;
\item
$\dpp \subseteq \ld$ and $S$ satisfies the identity (\ref{bi1});
\item
$S \in \vn$ and $\rd \subseteq \dpp \subseteq \ld$;
\item
$\leq^{l}_{\cdot} \subseteq \leq_{+}$ for $S$;
\item
$S$ satisfies the identity
\begin{equation}
 x \approx xy+x+xy;                                      \label{identity for ldot}
\end{equation}
\item
$S$ satisfies the identity
\begin{equation}
 x \approx x(y+x+y).
\end{equation}
\end{enumerate}
\end{Theorem}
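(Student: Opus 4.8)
The plan is to organize the six conditions into two clusters: the ``syntactic'' conditions (2)--(6), which I will show are pairwise equivalent by almost formal manipulation, and condition (1), which I will attach to the cluster through condition (3). Inside the cluster several links are cheap. Conditions (5) and (6) coincide on inspection, since distributivity and multiplicative idempotency give $x(y+x+y)=xy+x^2+xy=xy+x+xy$. For the equivalence of (2) and (3) I would just invoke Lemma \ref{bi1 iff n and}, which identifies ``$S$ satisfies (\ref{bi1})'' with ``$S\in\vn$ and $\rd\subseteq\dpp$''; adjoining the common hypothesis $\dpp\subseteq\ld$ turns (2) into (3) verbatim. For the equivalence of (4) and (5) the key observation is that for all $x,y$ the element $a:=xy$ satisfies $a=xy=x\cdot xy=ba$ with $b:=x$, so $a\leq^{l}_{\cdot}b$; feeding this into (4) gives $x=xy+x=x+xy$ and hence (5), while conversely specialising (5) at $x=b$, $y=a$ when $a=ba$ yields $b=a+b+a$, from which $a\leq_{+}b$ follows.

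The substantive link inside the cluster is the equivalence of (2) and (5). From (5) I would first extract the two one-sided absorption laws $x+xy\approx x$ and $xy+x\approx x$, by adding $xy$ on the right, respectively left, of $x=xy+x+xy$ and collapsing the repeated summand; these immediately give (\ref{bi1}), and $\dpp\subseteq\ld$ then follows by left-multiplying the two defining equalities of a $\dpp$-pair $a,b$ by the opposite element and applying (5) in the forms $ab+a+ab=a$ and $ba+b+ba=b$ to conclude $ab=a$, $ba=b$. Conversely, assuming (2) I would manufacture $\dpp$-pairs by hand: using (\ref{bi1}) one checks $(x+xy)\,\dpp\,x$ and $(xy+x)\,\dpp\,x$, whereupon $\dpp\subseteq\ld$ forces $x(x+xy)=x$ and $x(xy+x)=x$, that is $x+xy\approx x$ and $xy+x\approx x$, which reassemble into (5).

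It remains to connect (1) with the cluster, and I would route this through (3). For $(1)\Rightarrow(3)$ the clean idea is that $\ld\subseteq\dd$ always holds (standard Green containment), while $\dd\subseteq\eta$ by the same reasoning that gives $\dpp\subseteq\eta$ in the introduction, since $\dd$ is the least congruence making the multiplicative band a semilattice; hence $\eta=\ld$ squeezes $\dd=\ld=\eta$. Theorem \ref{ddot least lat cong} then delivers $S\in\vn$ and $\dpp\subseteq\dd=\ld$, and because $\ld\cap\rd$ is trivial on any band, the inclusion $\rd\subseteq\dd=\ld$ forces $\rd$ to be the identity relation, so $\rd\subseteq\dpp$. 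This is precisely (3).

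The main obstacle is $(3)\Rightarrow(1)$. Since $S\in\vn$, Theorem \ref{sigma is the ldlc} identifies $\eta$ with the now-transitive relation $\sigma$, so it suffices to prove $\ld=\sigma$. The inclusion $\ld\subseteq\sigma$ is routine, because $a\,\ld\,b$ gives $aba=a$ and $bab=b$ and the two defining equalities of $\sigma$ then hold trivially. The hard inclusion is $\sigma\subseteq\ld$, and I would handle it in two stages. First, using the absorption laws $x+xy\approx x$ and $xy+x\approx x$ available from the equivalent condition (5), I collapse a $\sigma$-pair to a $\dd$-pair: the relation $aba=aba+a+aba$ rewrites, via $a+aba=a$ and $aba+a=a$, to $aba=a$, and symmetrically $bab=b$. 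Second, from $aba=a$, $bab=b$ I climb from $\dd$ up to $\ld$ through the intermediate element $ab$: one checks $a\,\rd\,ab$ (as $a\cdot ab=ab$ and $ab\cdot a=aba=a$) and $ab\,\ld\,b$ (as $ab\cdot b=ab$ and $b\cdot ab=bab=b$), so $\rd\subseteq\dpp\subseteq\ld$ gives $a\,\ld\,ab$, and transitivity of $\ld$ yields $a\,\ld\,b$. This proves $\sigma\subseteq\ld$ and hence $\ld=\eta$, closing the cycle. The delicate point is that only the right-handed absorption laws survive in this variety, which is exactly why the final climb must be routed through $ab$ rather than attempted in a left-right symmetric fashion.
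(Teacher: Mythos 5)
Your proposal is correct --- I verified every link --- but it is organized genuinely differently from the paper's proof, which establishes the cycle $(1)\Rightarrow(2)\Rightarrow(5)\Rightarrow(1)$ and attaches $(3)$, $(4)$, $(6)$ by the same cheap links you use ($(2)\Leftrightarrow(3)$ via Lemma \ref{bi1 iff n and}, $(4)\Leftrightarrow(5)$ by essentially your computation, $(5)\Leftrightarrow(6)$ trivially). The substantive differences are three. For $(2)\Rightarrow(5)$ the paper feeds the pair $(a+b)\,\dpp\,(b+a)$ into $\dpp\subseteq\ld$ and then massages with (\ref{bi1}), whereas you feed in $(x+xy)\,\dpp\,x$ and $(xy+x)\,\dpp\,x$; both work, and yours has the advantage of giving $(5)\Rightarrow(2)$ directly rather than around a cycle. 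For attaching $(1)$, the paper's $(1)\Rightarrow(2)$ is a two-line computation from the absorption $(a+ab)\,\eta\,a$, while your $(1)\Rightarrow(3)$ is structural: $\dd\subseteq\eta$ always holds (Clifford--McLean applied to the band $(S,\cdot)$, dual to the paper's remark that $\dpp\subseteq\eta$), so $\eta=\ld$ squeezes $\dd=\ld$; Theorem \ref{ddot least lat cong} then gives $S\in\vn$ and $\dpp\subseteq\ld$, and triviality of $\hd$ on a band gives $\rd\subseteq\dpp$. For the hard converse, the paper proves $(5)\Rightarrow(1)$: identity (\ref{identity for ldot}) implies (\ref{identity for ddot}), so $\eta=\dd$ by Theorem \ref{ddot least lat cong}, and then $a=aba$ forces $a=a+ab+a=aba+ab+aba=ab$, giving $\dd\subseteq\ld$; you instead prove $(3)\Rightarrow(1)$ by invoking Theorem \ref{sigma is the ldlc} to get $\eta=\sigma$, collapsing $\sigma$-pairs to $\dd$-pairs with the absorption laws, and then climbing from $\dd$ to $\ld$ through the intermediate element $ab$ using $\rd\subseteq\dpp\subseteq\ld$. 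The trade-off: the paper's route is shorter and purely equational once Theorem \ref{ddot least lat cong} is in hand, while yours exposes the structural mechanism (which Green inclusions do the work, and exactly where the hypothesis $\rd\subseteq\dpp$ is needed) at the price of explicitly calling on Theorem \ref{sigma is the ldlc} and standard band facts. Note also that your use of $(5)$ inside $(3)\Rightarrow(1)$ is legitimate, since your links $(3)\Leftrightarrow(2)\Leftrightarrow(5)$ are established without reference to $(1)$.
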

\begin{proof}
Equivalence of (2) and (3) follows from the Lemma \ref{bi1 iff n and}. Equivalence of (5) and (6) follows trivially.
\\ $\mathbf{(1) \, \Rightarrow \, (2):}$ \, Let $a, b \in S$. Then $(a+ab) \ld a$ implies that $a=a(a+ab)=a+ab$ and so $a=a+ab+a$ for all $a, b \in S$. Thus $S$ satisfies the identity (\ref{bi1}). Since $\dpp$ is the least semilattice congruence on the additive reduct $(S, +)$, $\dpp \subseteq \ld$.
\\ $\mathbf{(2) \, \Rightarrow \, (5):}$ \, Let $a, b \in S$. Then $(a+b) \dpp (b+a)$ and so $(a+b) \ld (b+a)$. Therefore
\begin{align*}
& a+b=(a+b)(b+a)=ab+a+b+ba \\
\textrm{and} \; \; & b+a=(b+a)(a+b)=ba+b+a+ab
\end{align*}
Also $S$ satisfies the identity (\ref{bi1}). Thus we have
\begin{align*}
& a=a(a+b)+a(b+a) \\
\Rightarrow \, & a=a(ab+a+b+ba)+a(ba+b+a+ab) \\
\Rightarrow \, & a=ab+a+ab+aba+ab+a+ab \\
\Rightarrow \, & ab+a+ab=ab+a+ab+aba+ab+a+ab \\
\Rightarrow \, & a=ab+a+ab
\end{align*}
for all $a, b \in S$.
\\ $\mathbf{(5) \, \Rightarrow \, (1):}$ \, For every $a, b \in S$, we have $a=ab+a+ab$. This implies that $a=aba+a+aba$. Therefore $\dd$ is the least distributive lattice congruence on $S$, by Theorem~\ref{ddot least lat cong}. Also for every $a, b \in S$, \,
\begin{align*}
& a=ab+a+ab  \\
\Rightarrow \, & a=a+ab+a+ab+a  \\
\Rightarrow \, & a=a+ab+a
\end{align*}
Now let $a, b \in S$ such that $a \dd b$. Then $a=aba$ and $b=bab$. Now
\begin{align*}
& a=aba \\
\Rightarrow \, & a+ab+a=aba+ab+aba \\
\Rightarrow \, & a=ab
\end{align*}
Similarly $b=ba$. Hence $a \ld b$ and so $\dd=\ld$. Thus $\ld$ is the least distributive lattice congruence on $S$.
\\ $\mathbf{(4) \, \Rightarrow \, (5):}$ \, Let $a, b \in S$. Then $ab \leq^{l}_{\cdot} a$. This implies that $ab \leq_{+} a$. Then $ab+a=a=a+ab$ and so $a=ab+a+ab$. Thus $S$ satisfies the identity (\ref{identity for ldot}.
\\ $\mathbf{(5) \, \Rightarrow \, (4):}$ \, Let $a, b \in S$ such that $a \leq^{l}_{\cdot} b$. Then $a=ba$. Also $a, b \in S$ implies that $b=ba+b+ba$. This implies that
\begin{align*}
& b=a+b+a \\
\Rightarrow \, & b+a=b=a+b
\end{align*}
and so $a \leq_{+} b$. Therefore $\leq^{l}_{\cdot} \subseteq \leq_{+} $.
\end{proof}

The left-right dual of this result is stated as follows. Since the proof is similar to the above theorem, we omit.
\begin{Theorem}                                                     \label{rdot least lat cong}
Let $S$ be an idempotent semiring. Then the following conditions are equivalent.
\begin{enumerate}
\item
$\rd$ is the least distributive lattice congruence on $S$;
\item
$\dpp \subseteq \rd$ and $S$ satisfies the identity (\ref{bi2});
\item
$S \in \vn$ and $\ld \subseteq \dpp \subseteq \rd$;
\item
$\leq^{r}_{\cdot} \subseteq \leq_{+}$ for $S$;
\item
$S$ satisfies the identity
\begin{equation}
x \approx yx+x+yx;                                   \label{identity for rdot}
\end{equation}
\item
$S$ satisfies the identity
\begin{equation}
 x \approx (y+x+y)x.
\end{equation}
\end{enumerate}
\end{Theorem}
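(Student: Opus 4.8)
The plan is to deduce this theorem from Theorem \ref{ldot least lat cong} by the standard left-right duality of idempotent semirings, rather than repeating the mirror-image computations. For an idempotent semiring $S=(S,+,\cdot)$ I would form its \emph{opposite} $S^{\mathrm{op}}=(S,+,*)$, where $a*b=b\cdot a$. A direct check using the two distributive laws shows that $S^{\mathrm{op}}$ is again an idempotent semiring, that $(S^{\mathrm{op}})^{\mathrm{op}}=S$, and that the assignment $S\mapsto S^{\mathrm{op}}$ sends $\vi$ into itself while leaving the additive reduct $(S,+)$ untouched; thus it is a left-right duality on $\vi$.

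Next I would record how the data appearing in the six conditions transform under this duality. Since the addition is unchanged, the relation $\dpp$, the order $\leq_{+}$, and — this is the key point — the whole lattice of distributive lattice congruences are carried to themselves: a congruence of $S$ is a congruence of $S^{\mathrm{op}}$ and conversely, and because the multiplication of a distributive lattice is commutative, $S/\rho$ is a distributive lattice if and only if $S^{\mathrm{op}}/\rho$ is. Hence $S$ and $S^{\mathrm{op}}$ have the same least distributive lattice congruence. On the multiplicative side one computes directly that $a\rd b$ in $S$ iff $a\ld b$ in $S^{\mathrm{op}}$, that $\leq^{r}_{\cdot}$ in $S$ is $\leq^{l}_{\cdot}$ in $S^{\mathrm{op}}$, and that the palindrome $xyx$ is fixed, so $\vn$ is self-dual. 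Finally the identity $x\approx yx+x+yx$ becomes $x\approx x*y+x+x*y$, the identity (\ref{bi2}) becomes (\ref{bi1}), and $x\approx(y+x+y)x$ becomes $x\approx x*(y+x+y)$.

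Putting these translations together, condition $(k)$ of the present theorem for $S$ is literally condition $(k)$ of Theorem \ref{ldot least lat cong} for $S^{\mathrm{op}}$, for each $k=1,\dots,6$; the equivalence of the six conditions for $S^{\mathrm{op}}$ then yields their equivalence for $S$. If instead one prefers to mirror the original argument step by step, one first proves the dual of Lemma \ref{bi1 iff n and} — namely that $S$ satisfies (\ref{bi2}) iff $S\in\vn$ and $\ld\subseteq\dpp$ — by the same computation with left and right multiplication interchanged, and then repeats the implications $(1)\Rightarrow(2)\Rightarrow(5)\Rightarrow(1)$ together with $(2)\Leftrightarrow(3)$, $(5)\Leftrightarrow(6)$ and $(4)\Leftrightarrow(5)$ verbatim in mirror image.

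I expect the only real obstacle to be the self-duality of the least distributive lattice congruence, i.e.\ checking that the opposite construction genuinely preserves the class of distributive lattice congruences and hence their least element; once that is in hand, every other correspondence is a mechanical relabelling of $xy$ as $yx$, and no new inequality or identity has to be verified by hand.
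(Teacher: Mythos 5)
Your proposal is correct and takes essentially the same route as the paper, which omits the proof of this theorem entirely, stating only that it is the left--right dual of Theorem \ref{ldot least lat cong}. Your opposite-semiring construction $S^{\mathrm{op}}$ is a rigorous formalization of exactly that duality, and your translation checks (addition, hence $\dpp$, $\leq_{+}$ and the distributive lattice congruences are unchanged; $\rd_S=\ld_{S^{\mathrm{op}}}$; $\vn$ is self-dual since $xyx$ is a palindrome; identity (\ref{bi2}) becomes (\ref{bi1}) and (\ref{identity for rdot}) becomes (\ref{identity for ldot})) are all accurate.
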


\section{Joins and Malcev's Products:}
The variety of all idempotent semirings $S$ such that $\dd [\ld, \rd]$ is the least distributive lattice congruence on $S$ will be denoted by $\mathbf{D}^{\bullet} [\mathbf{L}^{\bullet}, \mathbf{R}^{\bullet}]$. Thus it follows, by Theorem \ref{ddot least lat cong}, Theorem \ref{ldot least lat cong} and Theorem \ref{rdot least lat cong}, that the varieties $\mathbf{D}^{\bullet}, \mathbf{L}^{\bullet}$ and $\mathbf{R}^{\bullet}$ are determined by the additional identities (\ref{identity for ddot}), (\ref{identity for ldot}) and (\ref{identity for rdot}), respectively.

For subvarieties $\mathbf{V}$ and $\mathbf{W}$ of $\vi$, the {\it Mal'cev product} $\mathbf{V} \circ \mathbf{W}$ of $\mathbf{V}$ and $\mathbf{W}$ (within $\vi$) is the class of all idempotent semirings $S$ on which there exists a congruence $\rho$ such that $S/\rho \in \mathbf{W}$ and such that the $\rho$-classes belong to $\mathbf{V}$.

In this section, different characterizations of the varieties $\mathbf{D}^{\bullet}, \mathbf{L}^{\bullet}$ and $\mathbf{R}^{\bullet}$ by Malcev's product are given.
\begin{Theorem}                             \label{vld=lzd v vd}
\begin{enumerate}
\item[(i)]
$\mathbf{L}^{\bullet} = \mathbf{LZ}^{\bullet} \circ \vd$.
\item[(ii)]
$\mathbf{R}^{\bullet} = \mathbf{RZ}^{\bullet} \circ \vd$.
\end{enumerate}
\end{Theorem}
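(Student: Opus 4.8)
The plan is to prove (i) by establishing the two inclusions $\vld \subseteq \lzd \circ \vd$ and $\lzd \circ \vd \subseteq \vld$; part (ii) will then follow by the left--right multiplicative dual, interchanging $\ld$ with $\rd$, $\lzd$ with $\rzd$, and the identity (\ref{identity for ldot}) with (\ref{identity for rdot}), and invoking Theorem \ref{rdot least lat cong} in place of Theorem \ref{ldot least lat cong}. The central tool is Theorem \ref{ldot least lat cong}: membership $S \in \vld$ is equivalent to $S$ satisfying the identity (\ref{identity for ldot}), i.e.\ $x \approx xy + x + xy$, so each inclusion reduces to a statement that can be checked by manipulating this single identity.

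For $\vld \subseteq \lzd \circ \vd$ I would take $\rho = \ld$ as the witnessing congruence. If $S \in \vld$ then $\ld$ is itself a distributive lattice congruence, so $S/\ld \in \vd$. It remains to see that each $\ld$-class is a subsemiring belonging to $\lzd$. Closure is automatic: since $S/\ld$ is idempotent under both operations, $a \ld c$ forces $a + c \ld a$ and $ac \ld a$, so the class is closed under $+$ and $\cdot$. Moreover, whenever $a, b$ lie in one $\ld$-class we have $ab = a$ by the definition of $\ld$, which is exactly the defining identity $xy \approx x$ of $\lzd$; hence every class lies in $\lzd$ and $S \in \lzd \circ \vd$.

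The inclusion $\lzd \circ \vd \subseteq \vld$ carries the real content. Starting from a congruence $\rho$ with $S/\rho \in \vd$ whose classes all lie in $\lzd$, I would verify (\ref{identity for ldot}) directly. Fix $a, b \in S$ and put $w = ab + a + ab$. The crux is a two-sided evaluation of the product $aw$. On the one hand, left distributivity together with $a \cdot a = a$ gives $aw = a(ab) + aa + a(ab) = ab + a + ab = w$. On the other hand, I would first show $[w]_\rho = [a]_\rho$: passing to the quotient $S/\rho$ and using commutativity of $+$ together with the absorption identity (\ref{bi1}) valid in every distributive lattice, the image of $w$ collapses to the image of $a$. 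Since $a$ and $w$ then lie in one $\rho$-class, which satisfies the left-zero law $xy = x$, we get $aw = a$. Comparing the two evaluations yields $a = w = ab + a + ab$, which is (\ref{identity for ldot}); by Theorem \ref{ldot least lat cong}, $S \in \vld$.

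The main obstacle is precisely this reverse inclusion, and in particular the dual role played by the single $\rho$-class containing $a$ and $w$: its \emph{membership} data $[w]_\rho = [a]_\rho$ must be read off from the distributive-lattice structure of the quotient, while the \emph{equation} $aw = a$ must be read off from the internal left-zero structure of that same class. A preliminary point to pin down is that $\rho$-classes are genuine subsemirings, so that the phrase ``the $\rho$-class belongs to $\lzd$'' is meaningful; this closure under both operations is immediate from $S/\rho$ being a band under $+$ and under $\cdot$.
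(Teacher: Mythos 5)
Your proof is correct and follows essentially the same route as the paper: the easy inclusion is witnessed by the congruence $\ld$ itself (whose classes are left-zero bands by the very definition of $\ld$), and the hard inclusion $\lzd \circ \vd \subseteq \vld$ is obtained exactly as in the paper, by combining absorption in the distributive-lattice quotient with the left-zero law inside a single congruence class to derive the identity (\ref{identity for ldot}), and then invoking Theorem \ref{ldot least lat cong}. The only cosmetic difference is that you evaluate $a(ab+a+ab)$ in one step, whereas the paper evaluates $a(ab+a)$ and $a(a+ab)$ separately and then adds the two resulting equations.
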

\begin{proof}
(i) \ First we assume that $S \in \mathbf{LZ}^{\bullet} \circ \mathbf{D}$. Then there exists a congruence relation $\delta$ on $S$ such that $S/\delta \in \vd$ and such that each $\delta$-class belongs to $\mathbf{LZ}^{\bullet}$. Let $a, b \in S$. Since $S/\delta \in \vd$, we have $(a+ab) \delta a$ and $(ab+a) \delta a$. Since the $\delta$-class of $a$ is a left zero band for the multiplication, we have
$$ a=a(ab+a)=ab+a \hspace{.5cm} \textrm{and} \hspace{.5cm} a=a(a+ab)=a+ab, $$
and so $a=(ab+a)+(a+ab)=ab+a+ab$. Thus $S$ satisfies the identity \ref{identity for ldot}. Hence $S \in \mathbf{L}^{\bullet}$, by Theorem~\ref{ldot least lat cong}. Thus $\mathbf{LZ}^{\bullet} \circ \vd \subseteq \mathbf{L}^{\bullet}$. The reverse inclusion follows trivially. Hence $\mathbf{L}^{\bullet} = \mathbf{LZ}^{\bullet} \circ \vd$.
\\ (ii) \ The proof is left-right dual to (i).
\end{proof}

Thus for $S \in \mathbf{L}^{\bullet}$ the multiplicative reduct $(S, \cdot)$ is a left regular band and for $S \in \mathbf{R}^{\bullet}$ the multiplicative reduct $(S, \cdot)$ is a right regular band.
\begin{Lemma}
For an idempotent semiring $S$ the following conditions are equivalent:
\begin{enumerate}
\item \vspace{-.3cm}
$S \in \vln$.
\item \vspace{-.4cm}
$\dd$ is a congruence on $S$ and $S/\dd \in \lzp \circ \vd$.
\end{enumerate}
\end{Lemma}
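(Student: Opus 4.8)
The plan is to prove the two implications separately, pivoting on the observation that once $\dd$ is known to be a semiring congruence, the multiplicative reduct of $S/\dd$ is a semilattice, so that \emph{modulo} $\dd$ every product becomes commutative and idempotent. This collapses $yxy$, $bab$ and $aba$ to $xy$, $ab$ and $ab$ respectively, and lets me pass freely between the three identities in play. Throughout I would use $x+yxy \approx x$ as the working description of $\lzp \circ \vd$, which is available from Lemma \ref{lp is the least lat cong}.

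For $(2)\Rightarrow(1)$ I would argue as follows. Assume $\dd$ is a congruence and $S/\dd \in \lzp \circ \vd$, so by Lemma \ref{lp is the least lat cong} the quotient satisfies $x+yxy \approx x$. Fix $a,b \in S$ and pass to $S/\dd$; since $(S/\dd,\cdot)$ is a semilattice, $[b][a][b]=[a][b]$, so $x+yxy\approx x$ reads $[a]+[ab]=[a]$, i.e. $(a+ab)\,\dd\,a$. The defining property of $\dd$ then gives $a(a+ab)a=a$, and expanding the left side (using $a\cdot a=a$ and $a\cdot ab=ab$) yields $a+aba=a$. As $a,b$ were arbitrary, $S$ satisfies $x+xyx\approx x$, i.e. $S\in\vln$. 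I expect no difficulty here.

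For $(1)\Rightarrow(2)$ the quotient part is equally quick \emph{once $\dd$ is a congruence}: lifting $x+xyx\approx x$ to $S/\dd$ and using commutativity of the multiplication there turns $aba$ into $ab$, so $S/\dd$ satisfies $x+xy\approx x$, equivalently $x+yxy\approx x$; Lemma \ref{lp is the least lat cong} then places $S/\dd$ in $\lzp\circ\vd$. The substantive task is therefore to show that $\dd$ \emph{is} a congruence on $S$ whenever $S\in\vln$. Compatibility with multiplication is automatic, since on any band $\dd=\jd$ is the least semilattice congruence \cite{howie}; so everything reduces to additive compatibility, namely $a\,\dd\,b \Rightarrow (a+c)\,\dd\,(b+c)$ and $(c+a)\,\dd\,(c+b)$ for all $c$.

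To establish additive compatibility I would verify $(a+c)(b+c)(a+c)=a+c$ (and its mirror image) directly. Expanding and using $aba=a$ reduces this sandwich to a sum of eight products in $a,b,c$; the tools for collapsing it back to $a+c$ are the identity $x+xyx\approx x$ in its original form (e.g. $a+aca=a$ and $c+cbc=c$), its one-sided multiplied forms $wx+wxyx\approx wx$ and $xz+xyxz\approx xz$ obtained by multiplying $x+xyx\approx x$ on the left and on the right, the band-idempotency identities $(x+y)(x+y)=x+y$, and the relations $aba=a$, $bab=b$ coming from $a\,\dd\,b$. I expect this collapse---reconciling the purely \emph{right-hand} absorption that $\vln$ supplies with the non-commutativity of $+$---to be the main obstacle, and the precise point where the hypothesis $S\in\vln$ is genuinely used; the remaining work (the mirror identity and the left-hand compatibility $(c+a)\,\dd\,(c+b)$) is dual and routine.
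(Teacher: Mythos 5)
Your quotient-level arguments are correct and coincide with the paper's: for $(2)\Rightarrow(1)$ you push the identity $x+yxy\approx x$ of Lemma \ref{lp is the least lat cong} down to $S/\dd$, read off $(a+ab)\mathrel{\dd}a$, and expand $a(a+ab)a=a$ to get $a+aba=a$; and, granted that $\dd$ is a semiring congruence, $S/\dd$ inherits $x+xyx\approx x$, which on a multiplicative semilattice becomes $x+yxy\approx x$, so Lemma \ref{lp is the least lat cong} places $S/\dd$ in $\lzp\circ\vd$. You are also right that multiplicative compatibility of $\dd$ is automatic on a band.

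The genuine gap is the step you yourself flag as the main obstacle: the additive compatibility of $\dd$ when $S\in\vln$ is never proved, and it is not the routine collapse you describe. Expanding gives $(a+c)(b+c)(a+c)=a+abc+aca+ac+cba+cbc+ca+c$, and since $+$ is not commutative, your tools ($x+xyx\approx x$, its multiplied forms $wx+wxyx\approx wx$ and $xz+xyxz\approx xz$, idempotency, and $aba=a$, $bab=b$) absorb a summand only when it sits immediately to the right of a summand with matching first and last factors; no two adjacent terms in this expansion are so related (e.g.\ $abc$ separates $a$ from $aca$, and nothing to its left absorbs $abc$). What the one-sided $\vln$ identity does yield easily is the absorption $(a+c)+(a+c)(b+c)(a+c)=(a+c)$, but passing from this to the required equality $(a+c)(b+c)(a+c)=a+c$ is exactly the content of Theorem 2.11 of Pastijn and Zhao \cite{PZ2000}, which is a substantive theorem, and is what the paper cites at this point: since $x+xyx\approx x$ gives $x+xyx+x\approx x$, we have $\vln\subseteq\vn$, and that theorem asserts that $\dd$ is a congruence on every member of $\vn$. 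For a sense of how delicate such computations are even under stronger hypotheses, compare the paper's Section 4 argument that $\ld$ is additively compatible for $S\in\vdd$, which needs interleaved use of derived identities and $\ld$-relations between products rather than term-by-term absorption. So either invoke \cite{PZ2000} as the paper does, or supply the full computation; as written, the central claim of $(1)\Rightarrow(2)$ is left unestablished.
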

\begin{proof}
$\mathbf{(1) \, \Rightarrow \, (2):}$ \, Let $S \in \vln$. Then $S \in \vn$ and so $\dd$ is a congruence on $S$, by Theorem 2.11~\cite{PZ2000}. Hence $S/\dd \in \vln$ and $\dd_{S/\dd} \subseteq \lp_{S/\dd}$. Thus $S/\dd \in \lzp \circ \vd$.
\\ $\mathbf{(2) \, \Rightarrow \, (1):}$ \, Assume that $S/\dd \in \lzp \circ \vd$. Then $\lp_{S/\dd}$ is the least distributive lattice congruence on $S/\dd$, by Lemma \ref{lp is the least lat cong}. Let $\overline{a}$ be the $\dd$-class of $a$ in $S$. Then for all $a, b \in S$, we have
\begin{align*}
(\overline{a}+\overline{a}\overline{b}) \lp \overline{a} \, & \Rightarrow \, \overline{a}=\overline{a+ab} \\
                                        & \Rightarrow \, a=a+aba
\end{align*}
and so $S \in \vln$.
\end{proof}
\begin{Theorem}
\begin{enumerate}
\item[(i)]
$\vln = \mathbf{R}^{\bullet} \circ (\lzp \circ \vd)$.
\item[(ii)]
$\vrn = \mathbf{R}^{\bullet} \circ (\rzp \circ \vd)$.
\end{enumerate}
\end{Theorem}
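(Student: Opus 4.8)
The plan is to recognise the final Theorem as essentially a Mal'cev-product reformulation of the Lemma immediately preceding it, with $\dd$ serving as the witnessing congruence for one inclusion and Lemma \ref{lp is the least lat cong} supplying the other. Throughout I read $\mathbf{R}^{\bullet}$ as the variety of idempotent semirings whose multiplicative reduct is a rectangular band, i.e. which satisfy $xyx \approx x$; part (ii) is then the additive left--right dual of part (i), so I would prove (i) in full and only indicate the dualisation for (ii).

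For $\vln \subseteq \mathbf{R}^{\bullet} \circ (\lzp \circ \vd)$ I would take $S \in \vln$ and invoke the preceding Lemma to obtain that $\dd$ is a congruence on $S$ and $S/\dd \in \lzp \circ \vd$. Choosing $\rho = \dd$ as the witnessing congruence, the quotient condition is already in hand, so it remains only to check that every $\dd$-class lies in $\mathbf{R}^{\bullet}$. Here I would first note that, because of the two idempotency laws $x+x\approx x$ and $xx\approx x$, each congruence class of $S$ is closed under both operations and hence is a subsemiring; then, for any $a,b$ in a single $\dd$-class, $a\,\dd\,b$ gives $aba=a$, so the multiplicative reduct of that class satisfies $xyx\approx x$ and is a rectangular band. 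This exhibits $\rho=\dd$ as the required witness.

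For the reverse inclusion I would start from a congruence $\rho$ with $S/\rho \in \lzp \circ \vd$ whose classes all lie in $\mathbf{R}^{\bullet}$. By Lemma \ref{lp is the least lat cong} we have $S/\rho \in \vln$, so $S/\rho$ satisfies $x+xyx\approx x$; reading this back in $S$ gives $(a+aba)\,\rho\,a$ for all $a,b\in S$. The decisive move is to collapse this inside one class: since $a$ and $a+aba$ now lie in a common $\rho$-class, which is a rectangular band for the multiplication, the identity $uvu=u$ yields $a(a+aba)a=a$; on the other hand, distributing and using $aa\approx a$ gives $a(a+aba)a = a+aba$. Comparing the two evaluations forces $a+aba=a$, that is $S\in\vln$. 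Part (ii) is then obtained by dualising addition throughout: replace $\lzp$, $\lp$ and the identity $x+xyx$ by $\rzp$, $\rp$ and $xyx+x$, and use the additive dual of the preceding Lemma in its place.

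I expect the only genuinely delicate point to be the reverse inclusion, and within it the evaluation $a(a+aba)a = a+aba$: this is exactly where distributivity feeds off the idempotency of both reducts, and it is what converts the purely multiplicative rectangular-band identity, available only on a single $\rho$-class, into the additive identity $a+aba=a$ holding on all of $S$. I would also record explicitly that congruence classes are subsemirings, since otherwise the phrase ``the $\rho$-classes belong to $\mathbf{R}^{\bullet}$'' in the definition of the Mal'cev product would not be meaningful.
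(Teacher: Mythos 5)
Your proof is correct and takes essentially the same route as the paper's: the forward inclusion uses the preceding Lemma with $\dd$ as the witnessing congruence, and the reverse inclusion combines Lemma \ref{lp is the least lat cong} with the rectangular-band identity on the $\rho$-class containing $a$ and $a+aba$ to force $a+aba=a$. You also make explicit two points the paper leaves implicit --- that congruence classes are subsemirings, and the key evaluation $a(a+aba)a=a+aba$ --- and your reading of $\mathbf{R}^{\bullet}$ as the variety defined by $xyx\approx x$ is indeed the one the paper's argument requires, despite the conflicting redefinition of $\mathbf{R}^{\bullet}$ at the start of Section 4.
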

\begin{proof}
(Revise) \; \; Let $S \in \vln$. Then by the above lemma, $S/\dd \in \lzp \circ \vd$. Hence $\vln \subseteq \mathbf{R}^{\bullet} \circ (\lzp \circ \vd)$. If $S \in \mathbf{R}^{\bullet} \circ (\lzp \circ \vd)$ then there exists a congruence $\rho$ on $S$ such that $S/\rho \in \lzp \circ \vd$ and each $\rho$-class is in $\mathbf{R}^{\bullet}$. Hence  $S/\rho \in \vln$, by Lemma \ref{lp is the least lat cong}, and so $(a+aba) \rho a$ for all $a, b \in S$.
Since each $\rho$-class is in $\mathbf{R}^{\bullet}$, it follows that $a=a+aba$. Thus $\vln = \mathbf{R}^{\bullet} \circ (\lzp \circ \vd)$.
\end{proof}

Let $S_1$ and $S_2$ be two semirings and $D$ a distributive lattice. If there are two homomorphisms $\phi_1 : S_1 \longrightarrow D$ and $\phi_2 : S_2 \longrightarrow D$ onto $D$ then the semiring $S = \{(s_1, s_2) \in S_1 \times S_2 \mid \phi_1(s_1)=\phi_2(s_2)\}$ is called a spined product of the two semirings $S_1$ and $S_2$ with respect to the spine $D$.

We show that every semiring $S \in \mathbf{D}^{\bullet}$ is a spined product of a semiring $S_1 \in \mathbf{L}^{\bullet}$ and a semiring $S_2 \in \mathbf{R}^{\bullet}$. For this first we prove that the multiplicative reduct of each idempotent semiring  $S \in \mathbf{D}^{\bullet}$ is a normal band. We do the groundwork by providing a sequence of useful lemmas.
\begin{Lemma}
Every idempotent semiring satisfies the following two identities:
\begin{align}
& xyzx \approx xyzx+xyxzx+xyzx                                      \label{regband1}
\\ \textrm{and} \hspace{.5cm} & xyxzx \approx xyxzx+xyzx+xyxzx.             \label{regband2}
\end{align}
\end{Lemma}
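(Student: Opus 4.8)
The plan is to read the two identities together. Setting $P=xyzx$ and $Q=xyxzx$, the pair \eqref{regband1}--\eqref{regband2} asserts exactly that $P \mathrel{\dpp} Q$, i.e. that $xyzx$ and $xyxzx$ lie in the same $\dpp$-class of the \emph{additive} band $(S,+)$. The first thing I would stress is \emph{why this cannot be purely multiplicative}. We have $xyzx+xyxzx+xyzx = xy(z+xz+z)x$, and the tempting collapse is $x(z+xz+z)x = xzx$, which works only because $x\cdot xz = xz$. In our expression the letter $y$ sits between the leading $x$ and the bracket, so the band structure of $(S,\cdot)$ alone cannot absorb the extra $x$; the proof is forced to exploit the interaction of $+$ and $\cdot$ through distributivity.

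The engine will be the \emph{square-of-a-sum} identity, valid in every idempotent semiring: expanding $(a+b)=(a+b)(a+b)$ by distributivity and then applying $a^2\approx a$, $b^2\approx b$ yields
\begin{equation}
a+b \approx a+ab+ba+b. \label{plan-square}
\end{equation}
Alongside \eqref{plan-square} I would record the one-sided factorizations $z+xz\approx (z+x)z$ and $xz+z\approx (x+z)z$ (from $z\approx zz$ and distributivity) and the absorption $x(u+xu)\approx xu$.

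Next I would manufacture relations already carrying \emph{both} target words. Specializing \eqref{plan-square} to the pairs $\{xy,\ zx\}$ and $\{xy,\ xzx\}$ and then multiplying each resulting identity by $x$ on the left and by $x$ on the right gives two expansions of the \emph{same} sandwiched element $xyx+xzx$, namely $xyx+xyzx+xzxyx+xzx$ and $xyx+xyxzx+xzxyx+xzx$. Equating them produces the key relation
\[
xyx+xyxzx+xzxyx+xzx \;\approx\; xyx+xyzx+xzxyx+xzx,
\]
in which $xyxzx$ and $xyzx$ appear inside a common frame built from the auxiliary words $xyx$, $xzx$ and the mirror word $xzxyx$. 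Iterating this device with other specializations, and collapsing adjacent equal summands by additive idempotency, I would strip away the frame and drive the computation down to \eqref{regband1}; the companion identity \eqref{regband2} is then obtained by the same scheme, arranging the final cancellation so that $xyxzx$ (rather than $xyzx$) is the surviving summand.

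The main obstacle is precisely that \eqref{plan-square} is \emph{symmetric}: each time it creates the wanted cross term $xyxzx$ it simultaneously creates the mirror term $xzxyx$, and the bare summands feed in auxiliary words such as $xyx$ and $xzx$. The genuine work is therefore the bookkeeping---selecting the left/right multipliers and the order in which the derived relations are combined so that every mirror and auxiliary word is absorbed by an adjacent equal summand. In short, the difficulty is extracting a manifestly asymmetric conclusion ($xyzx$ dominating $xyxzx$ in the $\dpp$-sandwich) from an entirely symmetric tool; once the auxiliary and mirror terms are cleared, \eqref{regband1} and \eqref{regband2} fall out together.
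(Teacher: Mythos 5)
Your reduction of the pair \eqref{regband1}--\eqref{regband2} to the single assertion $xyzx \mathrel{\dpp} xyxzx$ is correct, the square-of-a-sum identity $a+b\approx a+ab+ba+b$ is valid, and the framed relation you derive from it is genuine. The gap is that everything after that point---``iterating this device \ldots{} I would strip away the frame''---is not bookkeeping: it is the entire content of the lemma. Indeed, the two identities say exactly that the quotient $S/\dpp$, which is an arbitrary idempotent semiring with commutative addition, satisfies the regular band identity $xyzx\approx xyxzx$; so what you propose to finish by ``collapsing adjacent equal summands'' is equivalent to the theorem of Pastijn and Zhao \cite{PZ2005} that the multiplicative reduct of every idempotent semiring with commutative addition is a regular band, a published result with a substantive proof. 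The paper never attempts this: its proof is three lines, passing to $S/\dpp\in\slp$, citing \cite{PZ2005} for regularity of $(S/\dpp,\cdot)$, and unwinding the definition of $\dpp$ to recover \eqref{regband1} and \eqref{regband2}. To see concretely where your mechanism stalls, work in $S/\dpp$ and write $u\le v$ for $u+v=v$. Multiplying the expansion $xyx+xzx\approx xyx+xyzx+xzxyx+xzx$ on the left by $xyx$ (resp.\ on the right by $xzx$) and absorbing gives $xyzx\le xyx+xyxzx$ (resp.\ $xyzx\le xzx+xyxzx$); since $xyx\cdot xzx=xyx\cdot xyxzx=xyxzx\cdot xzx=xyxzx$, squaring yields
\begin{equation*}
xyzx=(xyzx)^2\le (xyx+xyxzx)(xzx+xyxzx)=xyxzx,
\end{equation*}
which is \eqref{regband2}. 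But the same scheme cannot reach \eqref{regband1}: the analogous bounds $xyxzx\le xyx+xyzx$ and $xyxzx\le xzx+xyzx$ do hold, yet their product collapses to $(xyx+xyzx)(xzx+xyzx)=xyxzx+xyzx$, the cross term $xyx\cdot xzx=xyxzx$ regenerating precisely the word you are trying to absorb. Every square-of-sum expansion has this feature, which is why the asymmetric half is the hard half of the Pastijn--Zhao theorem.

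So the proposal identifies the right statement and makes a correct first move, but then defers the actual difficulty---above all the derivation of \eqref{regband1}---to an unspecified iteration; as written it is a plan, not a proof. To fix it you must either genuinely reprove the Pastijn--Zhao regularity theorem by equational manipulation (nothing in your sketch indicates how), or follow the paper: observe that $\dpp$ is a semiring congruence with $S/\dpp\in\slp$, invoke \cite{PZ2005} to get $abca \mathrel{\dpp} abaca$ for all $a,b,c\in S$, and note that this $\dpp$-relation is literally the two displayed identities.
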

\begin{proof}
Let $S$ be an idempotent semiring. Then $\dpp$ is a congruence on $S$ and $S/ \dpp \in \mathcal{S}l^+$. Hence the multiplicative reduct $(S/ \dpp, \cdot)$ is a regular band~\cite{PZ2005}. Then for all $a, b, c \in S$, \ $abca \dpp abaca$ and so
\begin{align*}
& abca=abca+abaca+abca
\\ \textrm{and} \hspace{.5cm} & abaca=abaca+abca+abaca
\end{align*}
\end{proof}
\begin{Lemma}
For an idempotent semiring $S$ the following conditions are equivalent:
\begin{enumerate}
\item 
$S \in \mathbf{D}^{\bullet}$;
\item 
$S$ satisfies both the identities:
\begin{align*}
xz \approx xz+xyz \hspace{.5cm} \textrm{and} \hspace{.5cm} xz \approx xyz+xz;
\end{align*}
\item 
$S$ satisfies the identity:
\begin{equation}                                  \label{xz=xyz+xz+xyz}
xz \approx xyz+xz+xyz.
\end{equation}
\end{enumerate}
\end{Lemma}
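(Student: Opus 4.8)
The three conditions split into a cheap part and one substantive direction, so the plan is to run the cycle $(1)\Rightarrow(2)\Rightarrow(3)\Rightarrow(1)$ and do real work only in $(1)\Rightarrow(2)$. For $(2)\Rightarrow(3)$ I would substitute: from $xz=xz+xyz$ and $xz=xyz+xz$ one gets $xz=xyz+(xz+xyz)=xyz+xz+xyz$. Conversely, any identity of the shape $q\approx p+q+p$ forces $q+p=(p+q+p)+p=p+q+p=q$ and $p+q=p+(p+q+p)=q$ once $(S,+)$ is a band; applied to $(3)$ with $q=xz$, $p=xyz$ this yields both identities of $(2)$, so $(2)\Leftrightarrow(3)$. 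For $(3)\Rightarrow(1)$ I would put $z=x$ in $(3)$ and use $xx=x$ to recover exactly \eqref{identity for ddot}, whence $S\in\mathbf{D}^{\bullet}$ by Theorem~\ref{ddot least lat cong}.

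It remains to prove $(1)\Rightarrow(2)$. Assuming \eqref{identity for ddot} we have $S\in\vn$, so by Lemma~\ref{equivalent N} the identity $xz+xyz+xz\approx xz$ is available, while the absorption trick applied to \eqref{identity for ddot} itself records the flanked absorptions $a+aba=a$ and $aba+a=a$. For $xz=xz+xyz$ the steps are: (i) specialise \eqref{identity for ddot} at $b=yz$ and multiply on the right by $z$ to get $xz=xyzxz+xz+xyzxz$, an identity of shape $q\approx p+q+p$, giving $xz+xyzxz=xz$; (ii) expand the doubled term by $xyzxz=xyz\cdot xz=xyz(xz+xyz+xz)=xyzxz+xyz+xyzxz$, using $xyz\cdot xyz=xyz$, and combine with (i) to obtain $xz=xz+xyz+xyzxz$; (iii) collapse the tail: since $z+zxz=z$ is an instance of $a+aba=a$, associativity gives $xyz\,(z+xz)=(xy)\bigl(z(z+xz)\bigr)=(xy)z=xyz$, that is $xyz+xyzxz=xyz$, and substituting this into (ii) yields $xz=xz+xyz$.

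The companion identity $xz=xyz+xz$ I would obtain by the mirror computation: specialise \eqref{identity for ddot} at $a=z$, $b=xy$ and multiply on the left by $x$ to reach $xz=xzxyz+xz+xzxyz$, run the same $\vn$-expansion, and collapse using $xzx+x=x$ (an instance of $aba+a=a$) together with right distributivity, $xzxyz+xyz=(xzx+x)(yz)=x\,yz=xyz$. This delivers $(2)$, and with $(2)\Leftrightarrow(3)\Rightarrow(1)$ the cycle closes.

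The main obstacle is exactly step (iii)—equivalently, understanding why $(1)\Rightarrow(2)$ is not a formality. A naive attempt only shows that $xz+xyz$ and $xz$ have the same image in the distributive lattice $S/\dd$ (they are $\dd$-related), which does not give equality, because the one-sided law $a+ab=a$ fails in $\mathbf{D}^{\bullet}$. The decisive move is to pass through the term $xyzxz$, which lies in the same $\dd$-class as $xyz$ but, unlike $xyz$, is two-sidedly absorbed by $xz$; spotting the substitution $b=yz$ in \eqref{identity for ddot} and the regrouping $(xyz)(z+xz)=(xy)\bigl(z(z+xz)\bigr)$ is where the argument is non-obvious.
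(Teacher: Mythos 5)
Your proof is correct, and it uses the same decomposition as the paper --- the cycle $(1)\Rightarrow(2)\Rightarrow(3)\Rightarrow(1)$ with all the real work in $(1)\Rightarrow(2)$ --- but the execution of that one substantive step is genuinely different. The paper substitutes \eqref{identity for ddot} into \emph{both} factors at once, writing $xz\approx(xyzx+x+xyzx)(zxyz+z+zxyz)$, expands into nine summands by distributivity, and then replaces the two extreme terms $xyzxzxyz$ by $xyz$; since the resulting sum begins and ends with $xyz$, additive idempotency yields both identities of $(2)$ at once. That replacement rests on the band identity $xyzxzxyz\approx xyz$ --- valid in every band because $xyz$ and $xzxyz$ have the same content, hence are $\dd$-related in the multiplicative reduct, so $aba=a$ applies --- which the paper invokes without comment. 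You instead perform two mirror-image one-sided substitutions (multiply $x\approx xyzx+x+xyzx$ on the right by $z$, respectively $z\approx zxyz+z+zxyz$ on the left by $x$), and clean up not with that band identity but with Lemma~\ref{equivalent N} (the $\vn$-identity $xz+xyz+xz\approx xz$) together with the factoring tricks $xyz+xyzxz=(xy)\bigl(z(z+xz)\bigr)=xyz$ and $xzxyz+xyz=(xzx+x)(yz)=xyz$, both of which reduce to the flanked absorptions coming from \eqref{identity for ddot}. The trade-off: the paper's computation is shorter but leans on an unproved (though standard) fact about bands, while yours is longer but entirely self-contained, with every step justified by identities already established in the paper; as a small bonus you also prove $(3)\Rightarrow(2)$ directly, so you get $(2)\Leftrightarrow(3)$ rather than only the cyclic implications.
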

\begin{proof}
It is clear that ${\textbf (2) \Rightarrow (3)}$ and ${\textbf (3) \Rightarrow (1)}$. Assume that (1) holds. Then $xz\approx (xyzx+x+xyzx)(zxyz+z+zxyz)\approx xyzxzxyz+xyzxz+xyzxzxyz+xzxyz+xz+xzxyz+xyzxzxyz+xyzxz+xyzxzxyz\approx xyz+xyzxz+xyzxzxyz+xzxyz+xz+xzxyz+xyzxzxyz+xyzxz+xyz$. This implies that $xz \approx xz+xyz$ and $xz \approx xyz+xz$.
\end{proof}

\begin{Lemma}
If $S\in\vdd$ then $S$ satisfies the identity:
\begin{equation}
xyzx\approx xzyx+xyzx+xzyx.                             \label{nbd primary}
\end{equation}
\end {Lemma}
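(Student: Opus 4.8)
The plan is to prove (\ref{nbd primary}) as a purely additive consequence of the identities already available for $S\in\vdd$, by transforming the right-hand side $xzyx+xyzx+xzyx$ into $xyzx$. The toolkit I would assemble consists of three ingredients. First, the ``sandwich'' identity (\ref{xz=xyz+xz+xyz}), $xz\approx xyz+xz+xyz$, together with its two one-sided forms $xz\approx xz+xyz$ and $xz\approx xyz+xz$ from the preceding lemma; combining these, for arbitrary words $U,V,W$ one may freely \emph{insert} a middle block, $UW=UW+UVW=UVW+UW=UVW+UW+UVW$. Second, the regular-band identities (\ref{regband1}) and (\ref{regband2}), valid in every idempotent semiring, which say that $xyzx$ and $xyxzx$ additively sandwich one another (and, after relabelling $y\leftrightarrow z$, so do $xzyx$ and $xzxyx$). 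Third, the defining identity (\ref{identity for ddot}) of $\vdd$, kept in reserve for expanding a single factor.

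First I would manufacture the natural intermediate words. Applying the sandwich identity (\ref{xz=xyz+xz+xyz}) to the factorisation $xyzx=x\cdot(yzx)$ with the letter $z$ inserted gives $xyzx=xzyzx+xyzx+xzyzx$, while applying it to $xzyx=(xzy)\cdot x$ with $z$ inserted gives $xzyx=xzyzx+xzyx+xzyzx$; thus $xyzx$ and $xzyx$ both additively sandwich the common word $xzyzx$ (and, symmetrically, the word $xyzyx$). These relations already collapse any $xzyzx$ standing next to an $xzyx$, by one-sided absorption. The substantive step is then to replace the inserted middle block so as to interchange $y$ and $z$: here I would bring in (\ref{regband1}) and (\ref{regband2}) to introduce a repeated outer $x$, rewriting the blocks as $xyxzx$ and $xzxyx$, refactor across the duplicated $x$ as $x\cdot yx\cdot zx$ versus $x\cdot zx\cdot yx$, and feed these factorisations back into the sandwich identity so that the two blocks $yx$ and $zx$ are exchanged modulo an additive sandwich. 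Chaining these rewrites, each annotated as either an absorption or an application of (\ref{regband1})/(\ref{regband2}), should bring $xzyx+xyzx+xzyx$ down to $xyzx$.

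The hard part is precisely the transposition of $y$ and $z$. Neither the one-sided absorptions nor the regular-band identities can, on their own, reorder two interior factors: absorption only inserts or deletes a middle block between a fixed prefix and suffix, and the regular-band identities only duplicate the outer letter. Indeed, if a free reordering were available one could derive the much stronger $xyzx\approx xzyx$ (multiplicative normality) outright, whereas that equality is only established later and with the help of (\ref{nbd primary}) itself; so the reordering must be smuggled in through the interaction of the additive sandwich with the duplicated-$x$ factorisations. The genuine difficulty is therefore the bookkeeping: choosing the prefix/suffix splits at each stage so that the inserted block carries out the swap while every auxiliary term remains absorbable, and in particular avoiding the circular loop in which the common lower bound $xzyzx$ is merely reabsorbed by its neighbours. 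I would organise the whole argument as a single displayed chain of equalities so that the cancellations are transparent, and I expect (\ref{nbd primary}) to come out as exactly the amount of reordering this interplay yields --- strictly weaker than normality, which is the next target.
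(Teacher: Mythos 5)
There is a genuine gap, and it sits exactly at the step you defer to ``bookkeeping.'' You have the right intermediate object --- the word $xzyzx$, obtained by inserting $z$ into factorisations of $xyzx$ and of $xzyx$, is precisely what the paper's proof runs through --- but your two relations $xyzx\approx xzyzx+xyzx+xzyzx$ and $xzyx\approx xzyzx+xzyx+xzyzx$ are absorptions pointing the wrong way: they say that $xyzx$ and $xzyx$ each swallow $xzyzx$, whereas to prove (\ref{nbd primary}) one needs $xzyzx$ to swallow $xzyx$, so that in $xyzx+xzyx\approx xyzx+xzyzx+xzyx$ the tail collapses to $xzyzx$ (and then into $xyzx$) rather than back to $xzyx$ --- this is exactly the ``circular loop'' you warn about but never escape. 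The escape is a \emph{multiplicative} collapse, not additive bookkeeping: left-multiply the instance $zx\approx zyx+zx+zyx$ of (\ref{xz=xyz+xz+xyz}) by $xzy$ and use idempotency of the multiplicative band, $xzy\cdot zyx=x(zy)(zy)x=xzyx$, to obtain the reverse absorption $xzyzx\approx xzyx+xzyzx+xzyx$. That one line is the whole content of the lemma, and it is how the paper argues:
\begin{align*}
xyzx & \approx (xzy+xy+xzy)zx \approx xzyzx+xyzx+xzyzx \\
     & \approx xzy(zyx+zx+zyx)+xyzx+xzy(zyx+zx+zyx) \\
     & \approx xzyx+xzyzx+xzyx+xyzx+xzyx+xzyzx+xzyx;
\end{align*}
since the last expression begins and ends with $xzyx$, adjoining one more $xzyx$ on either side changes nothing, so additive idempotency gives $xyzx+xzyx\approx xyzx$ and $xzyx+xyzx\approx xyzx$, hence $xzyx+xyzx+xzyx\approx xyzx$.

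Moreover, the tool you earmark for the transposition cannot perform it. The regular-band identities (\ref{regband1}) and (\ref{regband2}) have the \emph{outer} form $A\approx A+B+A$ and $B\approx B+A+B$ (i.e.\ $xyzx\mathrel{\dpp}xyxzx$), and no one-sided absorption $A+B\approx A$ follows from that form: in any idempotent semiring whose additive reduct is a right-zero band (for instance the free band on $\{x,y,z\}$ under multiplication, with $a+b=b$), both identities hold automatically while $xyzx+xyxzx\approx xyxzx\not\approx xyzx$. Only the \emph{inner} form $A\approx B+A+B$ supplied by (\ref{xz=xyz+xz+xyz}) yields absorptions. So ``feeding the factorisations $x\cdot yx\cdot zx$ and $x\cdot zx\cdot yx$ back into the sandwich identity'' cannot exchange the blocks in the way you hope, and indeed the paper's proof never invokes (\ref{regband1}) or (\ref{regband2}) at all.
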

\begin{proof}
Let $S \in \vdd$. Then for $x,y,z \in S$,
\begin{align*}
xyzx & \approx (xzy+xy+xzy)zx \\
     & \approx xzyzx+xyzx+xzyzx \\
     & \approx xzy(zyx+zx+zyx)+xyzx+xzy(zyx+zx+zyx) \\
     & \approx xzyx+xzyzx+xzyx+xyzx+xzyx+xzyzx+xzyx,
\end{align*}
which implies that $xyzx+xzyx \approx xyzx$ and hence $xzyx+xyzx+xzyx \approx xyzx$.
\end{proof}

\begin{theorem}
For $S \in \vdd$ the multiplicative reduct $(S,\cdot)$ is a normal band.
\end{theorem}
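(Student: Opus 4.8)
The plan is to prove that the multiplicative reduct $(S,\cdot)$, which is already a band, satisfies the defining identity $xyzx \approx xzyx$ of a normal band. I would extract this equality purely from the identity (\ref{nbd primary}) together with its $y \leftrightarrow z$ symmetric form, using nothing more than the additive idempotency of $(S,+)$; no further multiplicative manipulation should be required.

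First I would record the two identities. Since (\ref{nbd primary}) holds for all $x,y,z \in S$, interchanging the names of the universally quantified variables $y$ and $z$ yields the companion identity, so we have the pair
\begin{align*}
xyzx &\approx xzyx + xyzx + xzyx, \\
xzyx &\approx xyzx + xzyx + xyzx.
\end{align*}
The key step is then to use these two identities \emph{asymmetrically}: add the summand $xzyx$ on the right of the first identity, and add the summand $xyzx$ on the left of the second identity, collapsing repeated summands by $a+a \approx a$ in $(S,+)$. From the first this gives $xyzx + xzyx \approx (xzyx+xyzx+xzyx)+xzyx \approx xyzx$, while from the second it gives $xyzx + xzyx \approx xyzx+(xyzx+xzyx+xyzx) \approx xzyx$. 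Comparing the two right-hand sides forces $xyzx \approx xzyx$, and hence $(S,\cdot)$ is a normal band.

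The point I expect to be the real, if small, obstacle is choosing the correct side on which to add. From (\ref{nbd primary}) alone the most natural move—adding $xzyx$ on the right—produces only $xyzx + xzyx \approx xyzx$, and symmetrically one obtains $xzyx + xyzx \approx xzyx$. Together these merely assert that $\{xyzx, xzyx\}$ is a left-zero band under $+$, which does \emph{not} force the two elements to coincide, so this naive route stalls. The trick that unlocks the argument is to feed the two symmetric identities in opposite directions (right-addition for one, left-addition for the other), so that the single expression $xyzx + xzyx$ is shown to be equal both to $xyzx$ and to $xzyx$; additive idempotency then does all the remaining collapsing, and the normal band identity drops out immediately.
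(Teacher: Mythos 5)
Your proof is correct and takes essentially the same approach as the paper: both arguments derive $xyzx \approx xzyx$ purely equationally from identity (\ref{nbd primary}), its $y \leftrightarrow z$ swapped form, and the idempotency of $(S,+)$. The paper arranges the computation slightly differently, showing $xzyx \approx xzyx+xyzx+xzyx$ and comparing with (\ref{nbd primary}), so its common witness is $xzyx+xyzx+xzyx$ where yours is $xyzx+xzyx$, but this is only a cosmetic variation.
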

\begin{proof}
Let $S \in \vdd$ and $x,y,z,\in S$. Then $xyzx \approx xzyx+xyzx+xzyx$. Also we have
\begin{align*}
& xzyx \approx xzyx+xzyx+xzyx \\
\Rightarrow \;& xzyx \approx xzyx+xyzx+xzyx+xyzx+xzyx  \\
\Rightarrow \;& xzyx+xyzx+xzyx \approx xzyx \\
\Rightarrow \;& xyzx \approx xzyx.
\end{align*}
\end{proof}

\begin{theorem}
For every idempotent $S$ the following conditions are equivalent:
\begin{itemize}
\item[1.]
$S \in \lnbd \cap \vdd$;
\item[2.]
$S$ satisfies the identity:  $$xz\approx xzy+xz+xzy;$$
\item[3.]
$S \in \mathbf{L}^{\bullet}$.
\end{itemize}
\end{theorem}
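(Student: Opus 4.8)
The plan is to run the cycle $(1) \Rightarrow (2) \Rightarrow (3) \Rightarrow (1)$, arranging things so that the equivalence of $(2)$ and $(3)$ is obtained by pure substitution and all the band-theoretic content is concentrated in the step $(3) \Rightarrow (1)$. For $(1) \Rightarrow (2)$ I would argue as follows: since $S \in \vdd$, the lemma characterising $\vdd$ by the identity $xz \approx xyz + xz + xyz$ applies, giving $xz \approx xyz + xz + xyz$; and since $S \in \lnbd$ satisfies $xyz \approx xzy$, substituting the latter into the former yields $xz \approx xzy + xz + xzy$, which is exactly $(2)$. For $(2) \Rightarrow (3)$ I would simply put $z = x$ in the identity of $(2)$ and use $x\cdot x \approx x$; the right-hand side collapses to $xy + x + xy$, so $x \approx xy + x + xy$, which is identity (\ref{identity for ldot}), and Theorem \ref{ldot least lat cong} then places $S$ in $\mathbf{L}^{\bullet}$.

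The real work is in $(3) \Rightarrow (1)$. Assuming $S \in \mathbf{L}^{\bullet}$, I would first recover $S \in \vdd$: from (\ref{identity for ldot}) one obtains $x \approx xyx + x + xyx$ (exactly as in the proof of $(5)\Rightarrow(1)$ of Theorem \ref{ldot least lat cong}), so Theorem \ref{ddot least lat cong} gives $S \in \vdd$, and consequently the multiplicative reduct is a normal band, i.e. $xyzx \approx xzyx$, by the earlier theorem establishing normality of $(S,\cdot)$ for $S \in \vdd$. It then remains to verify the left normal identity $xyz \approx xzy$. Here I would invoke Theorem \ref{vld=lzd v vd}, namely $\mathbf{L}^{\bullet} = \mathbf{LZ}^{\bullet} \circ \vd$, which makes the multiplicative reduct a left regular band, so that $uv \approx uvu$ holds; taking $u = x$ and $v = yz$ gives $xyz \approx xyzx$, and likewise $xzy \approx xzyx$. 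Chaining these with normality,
\[
xyz \approx xyzx \approx xzyx \approx xzy,
\]
so $S \in \lnbd$, and therefore $S \in \lnbd \cap \vdd$, completing the cycle.

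The step I expect to be the main obstacle is precisely the derivation of the left normal law in $(3)\Rightarrow(1)$, since at first sight it appears to require a genuine semiring computation mixing the additive identity (\ref{identity for ldot}) with the multiplication. The key observation that dissolves the difficulty is that one need not compute in the semiring at all: $\mathbf{L}^{\bullet}$ already delivers two purely multiplicative facts — left regularity from the Mal'cev decomposition of Theorem \ref{vld=lzd v vd}, and normality from membership in $\vdd$ — and left regularity lets one append or delete the trailing factor ($xyz \approx xyzx$), after which normality swaps the two interior factors. In other words, the content of $(3)\Rightarrow(1)$ reduces to the band-theoretic fact that a left regular normal band is left normal, realised by the three-term chain above.
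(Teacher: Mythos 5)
Your proposal is correct, but the decisive step is handled by a genuinely different route than the paper's. Your $(1) \Rightarrow (2)$ coincides with the paper's (both invoke the lemma characterising $\vdd$ by identity (\ref{xz=xyz+xz+xyz}) and then substitute $xyz \approx xzy$), and your $(2) \Rightarrow (3)$ is just the substitution argument that the paper compresses into ``equivalence of $(2)$ and $(3)$ is trivial.'' The difference is where the real content sits: the paper never argues $(3) \Rightarrow (1)$; instead it proves $(2) \Rightarrow (1)$ by a direct equational computation inside the semiring, first deriving $xzy+xyz+xzy \approx xyz$ by expanding $xyz \approx xyzy+xyz+xyzy$ via $xy \approx xzy+xy+xzy$, then deriving $xzy \approx xzy+xyz+xzy$ by a similar expansion, and combining the two to conclude $xyz \approx xzy$. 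You instead close the cycle with $(3) \Rightarrow (1)$ by citing two structural facts already available at this point of the paper: $\mathbf{L}^{\bullet} \subseteq \vdd$ (since (\ref{identity for ldot}) yields (\ref{identity for ddot}), then Theorem \ref{ddot least lat cong}), whence $(S,\cdot)$ is normal, $xyzx \approx xzyx$, by the preceding theorem; and left regularity $xy \approx xyx$ of $(S,\cdot)$, which the paper asserts immediately after Theorem \ref{vld=lzd v vd} as a consequence of $\mathbf{L}^{\bullet} = \mathbf{LZ}^{\bullet} \circ \vd$. Your three-term chain $xyz \approx xyzx \approx xzyx \approx xzy$ is then pure band theory. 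Both routes are sound and non-circular (the two facts you cite are established independently of this theorem). The trade-off: your argument is shorter and makes transparent that the theorem reduces to the band-theoretic fact that a left regular normal band is left normal; the paper's computation is self-contained equational reasoning using only the additive and multiplicative band laws and instances of identity $(2)$, and in particular it does not lean on the remark following Theorem \ref{vld=lzd v vd}, which the paper states without proof (it is the classical decomposition fact that a semilattice of left zero bands is a left regular band). If you wanted your proof to be fully self-contained at the level of the paper, that remark is the one ingredient you would still need to justify.
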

\begin{proof}
Equivalence of $(2)$ and $(3)$ is trivial.
\\ \textbf{$(1) \Rightarrow (2)$ :} \; Assume that $S \in \lnbd \cap \vdd$. Then we have
$$ xz \approx xyz+xz+xyz. $$
Since $S \in \lnbd$,
$$ xz \approx xzy+xz+xzy. $$
\textbf{$(2) \Rightarrow (1)$ :} \; Let $x,y,z \in S$. Then $xz \approx xzy+xz+xzy$ and $xz \approx xyz+xz+xyz$. Also we have
\begin{align*}
& xyz \approx xyzy+xyz+xyzy \\
\Rightarrow \; & xyz \approx (xzy+xy+xzy)zy+xyz+(xzy+xy+xzy)zy \\
\Rightarrow \; & xyz \approx xzy+xyzy+xzy+xyz+xzy+xyzy+xzy  \\
\Rightarrow \; & xzy+xyz+xzy \approx xyz.
\end{align*}
Now $xzy \approx xzy+xzy+xzy$ implies that
\begin{align*}
& xzy \approx xzy+xyzy+xzy+xyzy+xzy \\
\Rightarrow \; & xzy \approx xzy+xyzy+xzy \\
\Rightarrow \; & xzy \approx xzy+xyzyz+xyzy+xyzyz+xzy \\
\Rightarrow \; & xzy \approx xzy+xyz+xzy.
\end{align*}
Thus we get $xyz \approx xzy$ and so $S \in \lnbd$. Also it follows that $S \in \vdd$.
\end{proof}

Thus the multiplicative reduct $(S, \cdot)$ of every idempotent semiring $S \in \mathbf{L}^{\bullet}$ is a left normal band. Similarly, the multiplicative reduct $(S, \cdot)$ of every idempotent semiring $S \in \mathbf{R}^{\bullet}$ is a right normal band.

In $\mathbf{D}^{\bullet}$ we have the following derivation:
\begin{align*}
& x \approx xyx+x+xyx \\
\Rightarrow \, & x \approx x+xyx+x+xyx+x \\
\Rightarrow \, & x \approx x+xyx+x
\end{align*}

\begin{Theorem}
An idempotent semiring $S\in \mathbf{D}^{\bullet}$ if and only if $S$ is a spined product of an idempotent semiring $S_1\in \mathbf{L}^{\bullet}$ and an idempotent semiring $S_2\in \mathbf{R}^{\bullet}$ with respect to a distributive lattice $D$.
\end{Theorem}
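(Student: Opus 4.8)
The plan is to prove both implications, treating the sufficiency as a routine substitution and concentrating the work on the necessity, where the natural candidates for $S_1$ and $S_2$ are the quotients of $S$ by the multiplicative Green relations $\rd$ and $\ld$.

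For sufficiency, suppose $S$ is a spined product of $S_1 \in \mathbf{L}^{\bullet}$ and $S_2 \in \mathbf{R}^{\bullet}$ over a distributive lattice $D$; then $S$ is a subsemiring of $S_1 \times S_2$, so it suffices to check that both factors satisfy the identity (\ref{identity for ddot}). In $S_1$ the identity (\ref{identity for ldot}) holds, and replacing $y$ by $yx$ gives $x \approx xyx + x + xyx$; dually, in $S_2$ replacing $y$ by $xy$ in (\ref{identity for rdot}) yields the same identity. Hence $S_1 \times S_2$, and therefore its subsemiring $S$, satisfies (\ref{identity for ddot}), so $S \in \mathbf{D}^{\bullet}$ by Theorem \ref{ddot least lat cong}.

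For necessity, take $S \in \mathbf{D}^{\bullet}$ and set $S_1 = S/\rd$, $S_2 = S/\ld$ and $D = S/\dd = S/\eta$, the last being a distributive lattice. First I would establish the key lemma that $\rd$ and $\ld$ are congruences of the whole semiring $S$. Multiplicative compatibility is inherited from the fact, proved above, that $(S, \cdot)$ is a normal band, on which $\rd$ and $\ld$ are congruences. The additive compatibility is the main obstacle, but it is resolved by the observation that if $a \rd b$, i.e. $ab = b$ and $ba = a$, then the normal band identity gives $ac = bac = abac = abc = bc$, so right multiples by a common element coincide. Consequently $(a + c)(b + c) = b + ac + cb + c = b + bc + cb + c = (b + c)(b + c) = b + c$, the last equality because every element of the multiplicative band is idempotent; computing the three companion products in exactly the same way yields $a + c \rd b + c$ and $c + a \rd c + b$, and the whole argument dualizes to $\ld$.

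Granting this, I would verify $S_1 \in \mathbf{L}^{\bullet}$ and $S_2 \in \mathbf{R}^{\bullet}$ by checking their defining identities on classes: since $x(xy + x + xy) = xy + x + xy$ while $(xy + x + xy)x = xyx + x + xyx = x$ by (\ref{identity for ddot}), one has $x \rd (xy + x + xy)$, so $S/\rd$ satisfies (\ref{identity for ldot}); dually $S/\ld$ satisfies (\ref{identity for rdot}). Finally I would assemble the spined product: on the band $(S, \cdot)$ one has $\ld \cap \rd = \hd = \Delta$, so $s \mapsto (s\rd, s\ld)$ embeds $S$ into $S_1 \times S_2$; the inclusions $\rd, \ld \subseteq \dd$ induce surjections $\phi_1 : S_1 \to D$ and $\phi_2 : S_2 \to D$; and the classical band relations $\rd \circ \ld = \ld \circ \rd = \dd = \rd \vee \ld$ supply both permutability and $\rd \vee \ld = \eta$, which together force the image of $S$ to be exactly $\{(s_1, s_2) : \phi_1(s_1) = \phi_2(s_2)\}$ by the standard lemma on subdirect decompositions by a pair of permuting congruences with trivial intersection. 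Thus $S$ is the spined product of $S_1 \in \mathbf{L}^{\bullet}$ and $S_2 \in \mathbf{R}^{\bullet}$ with respect to $D$, completing the proof.
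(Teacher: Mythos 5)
Your proposal is correct, and its backbone coincides with the paper's own proof: take the quotients of $S$ by the multiplicative Green relations, prove the key lemma that $\ld$ and $\rd$ are congruences of the whole semiring (multiplicative compatibility coming from the previously proved fact that $(S,\cdot)$ is a normal band), check that $S/\rd$ satisfies $x \approx xy+x+xy$ and $S/\ld$ satisfies $x \approx yx+x+yx$, and assemble the spined product over $D=S/\dd$ using $\ld \cap \rd = \hd$ trivial on a band together with $\dd = \ld \circ \rd = \rd \circ \ld$. Two genuine differences deserve mention. First, for additive compatibility the paper carries out a longer computation inside $\mathbf{D}^{\bullet}$, expanding $(a+c)(b+c)$ and eliminating terms via the identities $x \approx xyx+x+xyx$ and $x \approx x+xyx+x$ together with $ca \ld cb$; you instead shortcut everything through the normality consequence that $a \rd b$ forces $ac=bc$ (your chain $ac=bac=abac=abc=bc$ is valid, the step $abac=abc$ being the medial law $xyzw \approx xzyw$ that normal bands satisfy), after which all four products collapse to squares of sums. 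This is cleaner and isolates exactly where normality is used. Second, the paper's proof establishes only the necessity direction, although the theorem is stated as an equivalence; your sufficiency argument --- a spined product is a subsemiring of $S_1 \times S_2$, and substituting $y \mapsto yx$ in $x \approx xy+x+xy$ (respectively $y \mapsto xy$ in $x \approx yx+x+yx$) shows both factors satisfy $x \approx xyx+x+xyx$ --- fills a step the paper leaves implicit (it is only recoverable there from the remark before the corollary that $\mathbf{L}^{\bullet}, \mathbf{R}^{\bullet} \subseteq \mathbf{D}^{\bullet}$), so your treatment is the more self-contained of the two.
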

\begin{proof}
Let $S \in \mathbf{D}^{\bullet}$. Then the multiplicative reduct $(S, \cdot)$ is a regular band. So both $\ld$ and $\rd$ are congruences on the multiplicative reduct $(S, \cdot)$. Let $a, b, c \in S$ and $a \ld b$. Then $a=ab, b=ba$ and $ca \ld cb, ac \ld bc$.
Hence we have,
\begin{align*}
(a+c)(b+c) &= ab+ac+cb+c \\
           &= a+ac+cbca+c \hspace{2cm} \textrm{(since $ca \ld cb$)} \\
       &= a+ac+(c+cac+c)bca+c \\
       &= a+ac+cbca+cacbca+cbca+c \\
       &= a+ac+cbca+ca+cbca+c \hspace{2cm} \textrm{(since $ca \ld cb$ and $\ld \subseteq \dd$)}\\
       &= a+ac+(cbc+c+cbc)a+c \\
       &= a+ac+ca+c \\
       &= (a+c)^2 \\
       &= a+c
\end{align*}
and similarly $(b+c)(a+c)=b+c$. Thus $(a+c) \ld (b+c)$. Similarly $(c+a) \ld (c+b)$. Hence $\ld$ is a congruence on $S$. Now for any $a, b \in S$,
\begin{align*}
& a=aba+a+aba \\
\Rightarrow \, & a=a(ba+a+ba)
\end{align*}
implies that $a \ld (ba+a+ba)$. Hence $S/\ld \in \mathbf{R}^{\bullet}$. Similarly $\rd$ is a congruence on $S$ and $S/\rd \in \mathbf{L}^{\bullet}$.
Denote $S_1=S/\ld$ and $S_2=S/\rd$. Also $D=S/\mathcal{D}^{\bullet}$ is a distributive lattice. Since $\ld$, $\rd \subseteq \mathcal{D}^{\bullet}$, it follows that $\phi_1:S_1\longrightarrow D$ and $\phi_2:S_2\longrightarrow D$ defined by $\phi_1(L_a)=D_a$ and $\phi_2(R_a)=D_a$ are well defined surjective homomorphisms, where $L_a \,[ R_a, D_a]$ is the $\ld$ [ $\rd$, $\mathcal{D}^{\bullet} ]$ -class containing $a$. Thus $R = \{(L_a, R_a) \in S_1 \times S_2 \mid a \mathrel\mathcal{D}^{\bullet} b \}$ is a spined product of $S_1$ and $S_2$ with respect to $D$.

Then the mapping   $\theta : S \longrightarrow R$ defined by $\theta(a)=(L_a, R_a)$ is a monomorphism. Again if $( L_a, R_a) \in R$, then we have $a \mathrel\mathcal{D}^{\bullet} b$. Also since $\mathcal{D}^{\bullet}=\ld o \rd$, there exists $c\in S$ such that $a\mathrel\ld c$ and $c \mathrel\rd b$. This implies that $\theta(c)=(L_a, R_a)$. Thus $\theta$ is onto.
\end{proof}

Thus $ \mathbf{D}^{\bullet} \subseteq \mathbf{L}^{\bullet} \vee \mathbf{R}^{\bullet} $. Also $\mathbf{L}^{\bullet} \subseteq \mathbf{D}^{\bullet}$ and $\mathbf{R}^{\bullet} \subseteq \mathbf{D}^{\bullet}$ implies that $ \mathbf{L}^{\bullet} \vee \mathbf{R}^{\bullet} \subseteq \mathbf{D}^{\bullet}$. Then we have the following corollary.

\begin{Corollary}
$ \mathbf{D}^{\bullet} = \mathbf{L}^{\bullet} \vee \mathbf{R}^{\bullet} $.
\end{Corollary}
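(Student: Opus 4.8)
The plan is to establish the two inclusions separately, leaning on the spined product theorem just proved for the substantial direction and on the defining identities for the trivial one. Throughout I use that the varietal join $\mathbf{L}^{\bullet} \vee \mathbf{R}^{\bullet}$ is, by definition, the smallest variety of idempotent semirings containing both $\mathbf{L}^{\bullet}$ and $\mathbf{R}^{\bullet}$, equivalently the closure of $\mathbf{L}^{\bullet} \cup \mathbf{R}^{\bullet}$ under homomorphic images, subalgebras and direct products.

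For $\mathbf{L}^{\bullet} \vee \mathbf{R}^{\bullet} \subseteq \mathbf{D}^{\bullet}$ I would first check that each of $\mathbf{L}^{\bullet}$ and $\mathbf{R}^{\bullet}$ is contained in $\mathbf{D}^{\bullet}$. If $S \in \mathbf{L}^{\bullet}$ then $S$ satisfies $x \approx xy+x+xy$; substituting $yx$ for $y$ gives $x \approx xyx+x+xyx$, which is the defining identity (\ref{identity for ddot}) of $\mathbf{D}^{\bullet}$, so $S \in \mathbf{D}^{\bullet}$, and the left-right dual argument yields $\mathbf{R}^{\bullet} \subseteq \mathbf{D}^{\bullet}$. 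Since $\mathbf{D}^{\bullet}$ is itself a variety containing both $\mathbf{L}^{\bullet}$ and $\mathbf{R}^{\bullet}$, and the join is the least such variety, this inclusion is immediate.

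The reverse inclusion $\mathbf{D}^{\bullet} \subseteq \mathbf{L}^{\bullet} \vee \mathbf{R}^{\bullet}$ carries the real content, and here I would invoke the spined product theorem directly. Given $S \in \mathbf{D}^{\bullet}$, that theorem presents $S$ (up to isomorphism) as a spined product $R \subseteq S_1 \times S_2$ with $S_1 \in \mathbf{L}^{\bullet}$, $S_2 \in \mathbf{R}^{\bullet}$ over a distributive lattice. The crucial point is that this spined product, although cut out by the fibre condition $\phi_1(s_1)=\phi_2(s_2)$, is a genuine subalgebra of $S_1 \times S_2$: because $\phi_1,\phi_2$ are homomorphisms, the defining equation is preserved by both $+$ and $\cdot$. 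Since $S_1, S_2 \in \mathbf{L}^{\bullet} \vee \mathbf{R}^{\bullet}$ and this join is closed under direct products and subalgebras, we obtain $S_1 \times S_2 \in \mathbf{L}^{\bullet} \vee \mathbf{R}^{\bullet}$ and hence $S \cong R \in \mathbf{L}^{\bullet} \vee \mathbf{R}^{\bullet}$.

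The main obstacle is conceptual rather than computational: one must recognize that closure of a variety under the operators $S$ and $P$ applies to the spined product precisely because it is a subalgebra of a direct product of members of the join. Once this is granted, no further calculation is required, and combining the two inclusions delivers $\mathbf{D}^{\bullet} = \mathbf{L}^{\bullet} \vee \mathbf{R}^{\bullet}$.
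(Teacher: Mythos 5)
Your proof is correct and follows essentially the same route as the paper: the inclusion $\mathbf{D}^{\bullet} \subseteq \mathbf{L}^{\bullet} \vee \mathbf{R}^{\bullet}$ comes from the spined product theorem (the spined product being a subalgebra of $S_1 \times S_2$), and the reverse inclusion from $\mathbf{L}^{\bullet}, \mathbf{R}^{\bullet} \subseteq \mathbf{D}^{\bullet}$ together with the minimality of the join. Your explicit verification of $\mathbf{L}^{\bullet} \subseteq \mathbf{D}^{\bullet}$ by substituting $yx$ for $y$ in the identity $x \approx xy+x+xy$ is a detail the paper leaves implicit, but it is the same argument.
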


\begin{center} {\bf PROBLEMS} \end{center}
Characterize the class of all idempotent semirings $S$ such that $\sigma$ is the least distributive lattice congruence on $S$.

\bibliographystyle{amsplain}

\end{document}